\newtheorem{theorem}{Theorem}[section]
\newtheorem{prop}[theorem]{Proposition}
\newtheorem{coro}[theorem]{Corollary}
\newtheorem{prop-def}{Proposition-Definition}[section]
\newtheorem{coro-def}{Corollary-Definition}[section]
\theoremstyle{definition}
\newtheorem{defn}[theorem]{Definition}
\newtheorem{remark}[theorem]{Remark}
\newtheorem{exam}[theorem]{Example}
\newcommand{\nc}{\newcommand}
\nc{\tred}[1]{\textcolor{red}{#1}}
\nc{\tblue}[1]{\textcolor{blue}{#1}}
\nc{\tgreen}[1]{\textcolor{green}{#1}}
\nc{\tpurple}[1]{\textcolor{purple}{#1}}
\nc{\btred}[1]{\textcolor{red}{\bf #1}}
\nc{\btblue}[1]{\textcolor{blue}{\bf #1}}
\nc{\btgreen}[1]{\textcolor{green}{\bf #1}}
\nc{\btpurple}[1]{\textcolor{purple}{\bf #1}}
\nc{\NN}{{\mathbb N}}
\nc{\ncsha}{{\mbox{\cyr X}^{\mathrm NC}}} \nc{\ncshao}{{\mbox{\cyr
X}^{\mathrm NC}_0}}
\renewcommand{\frak}{\mathfrak}
\renewcommand{\textbf}[1]{}
\newcommand{\delete}[1]{}
\nc{\mlabel}[1]{\label{#1}}
\nc{\mcite}[1]{\cite{#1}}
\nc{\mref}[1]{\ref{#1}}
\nc{\meqref}[1]{\eqref{#1}}
\nc{\mbibitem}[1]{\bibitem{#1}}
\nc{\mlabel}[1]{\label{#1}{\hfill \hspace{1cm}{\bf{{\ }\hfill(#1)}}}}
\nc{\mcite}[1]{\cite{#1}{{\bf{{\ }(#1)}}}}
\nc{\mref}[1]{\ref{#1}{{\bf{{\ }(#1)}}}}
\nc{\meqref}[1]{\eqref{#1}{{\bf{{\ }(#1)}}}}
\nc{\mbibitem}[1]{\bibitem[\bf #1]{#1}}
\nc{\Shu}{\mathrm{Sh}}
\nc{\tot}{\big|}
\nc{\AAO}{{A,\Omega}}
\nc{\msha}{\sha_\Omega}
\nc{\mrsha}{\sha_{\Omega,F}}
\nc{\opa}{\ast} \nc{\opb}{\odot} \nc{\op}{\bullet} \nc{\pa}{\frakL}
\nc{\arr}{\rightarrow} \nc{\lu}[1]{(#1)} \nc{\mult}{\mrm{mult}}
\nc{\diff}{\mathfrak{Diff}}
\nc{\opc}{\sharp}\nc{\opd}{\natural}
\nc{\ope}{\circ}
\nc{\dpt}{\mathrm{d}}
\nc{\hck}{H_{RT}}
\nc{\vdf}{\calf}
\nc{\ldf}{\calf_\ell}
\nc{\hlf}{H_\ell}
\nc{\onek}{\mathbf{1}_\bfk}
\nc{\tforall}{\, \text{ for }\, }
\nc{\qforall}{\quad \text{for all }}
\nc{\mrba}{MRBA\xspace}
\nc{\Mrba}{MRBA\xspace}
\nc{\mrbas}{MRBAs\xspace}
\nc{\Mrbas}{MRBAs\xspace}
\nc{\match}{matching\xspace}
\nc{\Match}{Matching\xspace}
\nc{\Mza}{Matching Zinbiel algebra\xspace}
\nc{\Mzas}{Matching Zinbiel algebras\xspace}
\nc{\mza}{matching Zinbiel algebra\xspace}
\nc{\mzas}{matching Zinbiel algebras\xspace}
\nc{\za}{Zinbiel algebra\xspace}
\nc{\paybe}{polarized associative Yang-Baxter equation\xspace}
\nc{\Paybe}{Polarized associative Yang-Baxter equation\xspace}
\nc{\cpaybe}{PAYBE}
\nc{\diam}{alternating\xspace}
\nc{\Diam}{Alternating\xspace}
\nc{\cdiam}{canonical alternating\xspace}
\nc{\Cdiam}{Canonical alternating\xspace}
\nc{\AW}{\mathcal{A}}
\nc{\rba}{Rota-Baxter algebra\xspace}
\nc{\rbas}{Rota-Baxter algebras\xspace}
\nc{\ari}{\mathrm{ar}}
\nc{\lef}{\mathrm{lef}}
\nc{\Sh}{\mathrm{ST}}
\nc{\Cr}{\mathrm{Cr}}
\nc{\st}{{Schr\"oder tree}\xspace}
\nc{\sts}{{Schr\"oder trees}\xspace}
\nc{\vertset}{\Omega} 
\nc{\assop}{\quad \begin{picture}(5,5)(0,0)
\line(-1,1){10}
\put(-2.2,-2.2){$\bullet$}
\line(0,-1){10}\line(1,1){10}
\end{picture} \quad \smallskip}
\nc{\operator}{\begin{picture}(5,5)(0,0)
\line(0,-1){6}
\put(-2.6,-1.8){$\bullet$}
\line(0,1){9}
\end{picture}}
\nc{\idx}{\begin{picture}(6,6)(-3,-3)
\put(0,0){\line(0,1){6}}
\put(0,0){\line(0,-1){6}}
\end{picture}}
\nc{\pb}{{\mathrm{pb}}}
\nc{\Lf}{{\mathrm{Lf}}}
\nc{\lft}{{left tree}\xspace}
\nc{\lfts}{{left trees}\xspace}
\nc{\fat}{{fundamental averaging tree}\xspace}
\nc{\fats}{{fundamental averaging trees}\xspace}
\nc{\avt}{\mathrm{Avt}}
\nc{\rass}{{\mathit{RAss}}}
\nc{\aass}{{\mathit{AAss}}}
\nc{\twovec}[2]{{\textstyle \left[#1\atop #2\right]}}
\nc{\vin}{{\mathrm Vin}}    
\nc{\lin}{{\mathrm Lin}}    
\nc{\inv}{\mathrm{I}n}
\nc{\gensp}{V} 
\nc{\genbas}{\mathcal{V}} 
\nc{\bvp}{V_P}     
\nc{\gop}{{\,\omega\,}}     
\nc{\bin}[2]{ (_{\stackrel{\scs{#1}}{\scs{#2}}})}  
\nc{\binc}[2]{ \left (\!\! \begin{array}{c} \scs{#1}\\
    \scs{#2} \end{array}\!\! \right )}  
\nc{\bincc}[2]{  \left ( {\scs{#1} \atop
    \vspace{-1cm}\scs{#2}} \right )}  
\nc{\bs}{\bar{S}} \nc{\cosum}{\sqsubset} \nc{\la}{\longrightarrow}
\nc{\rar}{\rightarrow} \nc{\dar}{\downarrow} \nc{\dprod}{**}
\nc{\dap}[1]{\downarrow \rlap{$\scriptstyle{#1}$}}
\nc{\md}{\mathrm{dth}} \nc{\uap}[1]{\uparrow
\rlap{$\scriptstyle{#1}$}} \nc{\defeq}{\stackrel{\rm def}{=}}
\nc{\disp}[1]{\displaystyle{#1}} \nc{\dotcup}{\
\displaystyle{\bigcup^\bullet}\ } \nc{\gzeta}{\bar{\zeta}}
\nc{\hcm}{\ \hat{,}\ } \nc{\hts}{\hat{\otimes}}
\nc{\barot}{{\otimes}} \nc{\free}[1]{\bar{#1}}
\nc{\uni}[1]{\tilde{#1}} \nc{\hcirc}{\hat{\circ}} \nc{\lleft}{[}
\nc{\lright}{]} \nc{\lc}{\lfloor} \nc{\rc}{\rfloor}
\nc{\curlyl}{\left \{ \begin{array}{c} {} \\ {} \end{array}
    \right .  \!\!\!\!\!\!\!}
\nc{\curlyr}{ \!\!\!\!\!\!\!
    \left . \begin{array}{c} {} \\ {} \end{array}
    \right \} }
\nc{\longmid}{\left | \begin{array}{c} {} \\ {} \end{array}
    \right . \!\!\!\!\!\!\!}
\nc{\onetree}{\bullet} \nc{\ora}[1]{\stackrel{#1}{\rar}}
\nc{\ola}[1]{\stackrel{#1}{\la}}
\nc{\ot}{\otimes} \nc{\mot}{{{\boxtimes\,}}}
\nc{\otm}{\overline{\boxtimes}} \nc{\sprod}{\bullet}
\nc{\scs}[1]{\scriptstyle{#1}} \nc{\mrm}[1]{{\rm #1}}
\nc{\margin}[1]{\marginpar{\rm #1}}   
\nc{\dirlim}{\displaystyle{\lim_{\longrightarrow}}\,}
\nc{\invlim}{\displaystyle{\lim_{\longleftarrow}}\,}
\nc{\mvp}{\vspace{0.3cm}} \nc{\tk}{^{(k)}} \nc{\tp}{^\prime}
\nc{\ttp}{^{\prime\prime}} \nc{\svp}{\vspace{2cm}}
\nc{\vp}{\vspace{8cm}} \nc{\proofbegin}{\noindent{\bf Proof: }}
\nc{\proofend}{$\blacksquare$ \vspace{0.3cm}}
\nc{\modg}[1]{\!<\!\!{#1}\!\!>}
\nc{\intg}[1]{F_C(#1)} \nc{\lmodg}{\!
<\!\!} \nc{\rmodg}{\!\!>\!}
\nc{\cpi}{\widehat{\Pi}}
\nc{\sha}{{\mbox{\cyr X}}}  
\newfont{\scyr}{wncyr10 scaled 550}
\nc{\ssha}{\mbox{\bf \scyr X}}
\nc{\shap}{\,{\mbox{\cyrs X}}\,} 
\nc{\shpr}{\diamond}    
\nc{\shp}{\ast} \nc{\shplus}{\shpr^+}
\nc{\shprc}{\shpr_c}    
\nc{\msh}{\ast} \nc{\zprod}{m_0} \nc{\oprod}{m_1}
\nc{\vep}{\epsilon} \nc{\labs}{\mid\!} \nc{\rabs}{\!\mid}
\nc{\sqmon}[1]{\langle #1\rangle}
\nc{\mmbox}[1]{\mbox{\ #1\ }} \nc{\dep}{\mrm{dep}} \nc{\fp}{\mrm{FP}}
\nc{\rchar}{\mrm{char}} \nc{\End}{\mrm{End}} \nc{\Fil}{\mrm{Fil}}
\nc{\Mor}{Mor\xspace} \nc{\gmzvs}{gMZV\xspace}
\nc{\gmzv}{gMZV\xspace} \nc{\mzv}{MZV\xspace}
\nc{\mzvs}{MZVs\xspace} \nc{\Hom}{\mrm{Hom}} \nc{\id}{\mrm{id}}
\nc{\im}{\mrm{im}} \nc{\incl}{\mrm{incl}} \nc{\map}{\mrm{Map}}
\nc{\mchar}{\rm char} \nc{\nz}{\rm NZ} \nc{\supp}{\mathrm Supp}
\nc{\Alg}{\mathbf{Alg}} \nc{\Bax}{\mathbf{Bax}} \nc{\bff}{\mathbf f}
\nc{\bfk}{{\bf k}} \nc{\bfone}{{\bf 1}} \nc{\bfx}{\mathbf x}
\nc{\bfy}{\mathbf y}
\nc{\base}[1]{\bfone^{\otimes ({#1}+1)}} 
\nc{\Cat}{\mathbf{Cat}}
\nc{\detail}{\marginpar{\bf More detail}
    \noindent{\bf Need more detail!}
    \svp}
\nc{\Int}{\mathbf{Int}} \nc{\Mon}{\mathbf{Mon}}
\nc{\rbtm}{{shuffle }} \nc{\rbto}{{Rota-Baxter }}
\nc{\remarks}{\noindent{\bf Remarks: }} \nc{\Rings}{\mathbf{Rings}}
\nc{\Sets}{\mathbf{Sets}} \nc{\wtot}{\widetilde{\odot}}
\nc{\wast}{\widetilde{\ast}} \nc{\bodot}{\bar{\odot}}
\nc{\bast}{\bar{\ast}} \nc{\hodot}[1]{\odot^{#1}}
\nc{\hast}[1]{\ast^{#1}} \nc{\mal}{\mathcal{O}}
\nc{\tet}{\tilde{\ast}} \nc{\teot}{\tilde{\odot}}
\nc{\oex}{\overline{x}} \nc{\oey}{\overline{y}}
\nc{\oez}{\overline{z}} \nc{\oef}{\overline{f}}
\nc{\oea}{\overline{a}} \nc{\oeb}{\overline{b}}
\nc{\weast}[1]{\widetilde{\ast}^{#1}}
\nc{\weodot}[1]{\widetilde{\odot}^{#1}} \nc{\hstar}[1]{\star^{#1}}
\nc{\lae}{\langle} \nc{\rae}{\rangle}
\nc{\lf}{\lfloor}
\nc{\rf}{\rfloor}
\nc{\QQ}{{\mathbb Q}}
\nc{\RR}{{\mathbb R}} \nc{\ZZ}{{\mathbb Z}}
\nc{\cala}{{\mathcal A}} \nc{\calb}{{\mathcal B}}
\nc{\calc}{{\mathcal C}}
\nc{\cald}{{\mathcal D}} \nc{\cale}{{\mathcal E}}
\nc{\calf}{{\mathcal F}} \nc{\calg}{{\mathcal G}}
\nc{\calh}{{\mathcal H}} \nc{\cali}{{\mathcal I}}
\nc{\call}{{\mathcal L}} \nc{\calm}{{\mathcal M}}
\nc{\caln}{{\mathcal N}} \nc{\calo}{{\mathcal O}}
\nc{\calp}{{\mathcal P}} \nc{\calr}{{\mathcal R}}
\nc{\cals}{{\mathcal S}} \nc{\calt}{{\mathcal T}}
\nc{\calu}{{\mathcal U}} \nc{\calw}{{\mathcal W}} \nc{\calk}{{\mathcal K}}
\nc{\calx}{{\mathcal X}} \nc{\CA}{\mathcal{A}}
\nc{\fraka}{{\mathfrak a}} \nc{\frakA}{{\mathfrak A}}
\nc{\frakb}{{\mathfrak b}} \nc{\frakB}{{\mathfrak B}}
\nc{\frakc}{{\mathfrak c}}
\nc{\frakD}{{\mathfrak D}} \nc{\frakF}{\mathfrak{F}}
\nc{\frakf}{{\mathfrak f}} \nc{\frakg}{{\mathfrak g}}
\nc{\frakH}{{\mathfrak H}} \nc{\frakL}{{\mathfrak L}}
\nc{\frakM}{{\mathfrak M}} \nc{\bfrakM}{\overline{\frakM}}
\nc{\frakm}{{\mathfrak m}} \nc{\frakP}{{\mathfrak P}}
\nc{\frakN}{{\mathfrak N}} \nc{\frakp}{{\mathfrak p}}
\nc{\frakS}{{\mathfrak S}} \nc{\frakT}{\mathfrak{T}}
\nc{\frakX}{{\mathfrak X}}
\nc{\BS}{\mathbb{S
}}
\font\cyr=wncyr10 \font\cyrs=wncyr7
\nc{\li}[1]{\textcolor{red}{#1}}
\nc{\lir}[1]{\textcolor{red}{Li:#1}}
\nc{\yi}[1]{\textcolor{blue}{Yi: #1}}
\nc{\xing}[1]{\textcolor{purple}{Xing:#1}}
\nc{\revise}[1]{\textcolor{red}{#1}}
\nc{\ID}{{\rm I}}\nc{\lbar}[1]{\overline{#1}}\nc{\bre}{{\rm bre}}
\nc{\sd}{\cals}\nc{\rb}{\rm RB}\nc{\A}{\rm A}\nc{\LL}{\rm L}\nc{\tx}{\tilde{X}}
\nc{\col}{\Delta_{RT}}\nc{\mul}{m_{RT}}\nc{\ul}{u_{RT}}\nc{\epl}{\varepsilon_{RT}}
\nc{\hl}{H_{RT}}\nc{\arro}[1]{#1}\nc{\px}{P_{\tx}}\nc{\pw}{P_{\mathfrak{w}}}\nc{\pl}{B_\omega^+}
\nc{\pp}{\pl}\nc{\ppp}[1]{B^+(#1)}\nc{\dw}{\diamond_{\mathfrak{w}}}\nc{\dl}{\diamond_{\rm \ell}}
\nc{\ncshaw}{\sha^{{\rm NC}}_{\Omega}}\nc{\ncshal}{\sha^{{\rm NC}}_{{\rm RT}}}
\nc{\ver}{\rm V}\nc{\ld}{l}\nc{\del}{\Delta_{{\rm \ell}}}\nc{\epsl}{\epsilon_{{\rm \ell}}}
\nc{\uul}{u_{{\rm \ell}}}\nc{\oneh}{\mathbf{1}}\nc{\onew}{\mathbf{1}}
\nc{\etree}{1} \nc{\conc}{m_{RT}}
\nc{\hrtb}{H_{RT}(X\sqcup\Omega)} \nc{\hrts}{H_{RT}(X, \Omega)}\nc{\rts}{\mathcal{T}(X, \Omega)}\nc{\rfs}{\mathcal{F}(X, \Omega)} \nc{\ncshall}{\sha^{{\rm NC}}_{{\rm RT}}} \nc{\ldl}{\leq_{\mathrm{dl}}} \nc{\pla}{B_{\alpha}^{+}} \nc{\plb}{B_{\beta}^{+}}
\nc{\bim}[1]{#1}  \nc{\shaop}{\sha_{\Omega}^{+}}  \nc{\shao}{\sha_{\Omega}}
\nc{\bbim}[2]{#1 #2} \nc{\bbbim}[2]{#1,\, #2} \nc{\RBF}{{\rm MRB}}
\nc{\frbf}{F_{\RBF}} \nc{\shaf}{\ssha_{\tiny{\Omega}}} \nc{\sham}{\diamond}
\nc{\dnx}{\Delta_n A} \nc{\dx}{\Delta A} \nc{\dgp}{{\rm deg_{P}}}
\nc{\dgt}{{\rm deg_{T}}} \nc{\dg}{{\rm deg}} \nc{\ida}{ID($A$)} \nc{\tu}{\tilde{u}} \nc{\tv}{\tilde{v}}
 \nc{\fbase}{\calb} \nc{\LF}{\mathrm{RF}} \nc{\FFA}{\mathrm{LF}} \nc{\irr}{\mathrm{Irr}}
 \nc{\result}{\bfk\mathrm{Irr}(S_n)}  \nc{\I}{I_{\mathrm{ID},n}^0}
 \nc{\nrs}{\calr_n^\star} \nc{\ii}{\mathrm{I}} \nc{\iii}{\mathrm{II}}
\nc{\intl}{{\rm int}}\nc{\ws}[1]{{#1}}\nc{\deleted}[1]{\delete{#1}}\nc{\plas}{placements\xspace}
\nc{\Id}{\mathrm{Id}} \nc{\Irr}{\mathrm{Irr}}
\nc{\tos}{totally ordered set} \nc{\nes}{nonempty set}
\nc{\rsha}{\sha^{\rm rel}}
\nc{\lra}{\longrightarrow}
\nc{\rel}{\mathrm{\text{rel}}}
\nc{\rsham}{\sham}
\nc{\bo}{\tau}
\nc{\sshao}{~\ssha_{\Omega}~}
\nc{\resham}{\diamond_{\Omega}^{\mathrm{rel}}}
\nc{\fraku}{\frak{u}}
\nc{\frakv}{\frak{v}}
\nc{\frabu}{\bar{\mathfrak{u}}}
\nc{\frabv}{\bar{\mathfrak{v}}}
\nc{\frabw}{\bar{\mathfrak{w}}}
\nc{\udl}[1]{\underline{#1}}
\nc{\Po}{(P_\omega)_{\omega\in \Omega}}
\nc{\basf}{F} \nc{\frakhat}[1]{\frak{#1'}}
\nc{\cten}[2]{\left[\begin{array}{c}#1 \\#2 \\ \end{array}\right]}
\begin{document}

\title[Commutative matching Rota-Baxter algebras and shuffle products with decorations]{
Commutative matching Rota-Baxter operators, shuffle products with decorations and matching Zinbiel algebras}
%
\author{Xing Gao}
\address{School of Mathematics and Statistics, Key Laboratory of Applied Mathematics and Complex Systems, Lanzhou University, Lanzhou, Gansu 730000, P.\,R. China;
and College of Science, Northwest A\& F University, Yangling, Shaanxi, 712100, China}
\email{gaoxing@lzu.edu.cn}

\author{Li Guo}
\address{Department of Mathematics and Computer Science, Rutgers University, Newark, NJ 07102, USA}
\email{liguo@rutgers.edu}

\author{Yi Zhang }
\address{School of Mathematics and Statistics,
Nanjing University of Information Science \& Technology, Nanjing, Jiangsu 210044, P.\,R. China}
\email{zhangy2016@nuist.edu.cn}

\date{\today}
\begin{abstract}
The Rota-Baxter algebra and the shuffle product are both algebraic structures arising from integral operators and integral equations. Free commutative Rota-Baxter algebras provide an algebraic framework for integral equations with the simple Riemann integral operator. The Zinbiel algebras form a category in which the shuffle product algebra is  the free object. Motivated by algebraic structures underlying integral equations involving multiple integral operators and kernels, we study commutative \match Rota-Baxter algebras and construct the free objects making use of the shuffle product with multiple decorations.
We also construct free commutative \match Rota-Baxter algebras in a relative context, to emulate the action of the integral operators on the coefficient functions in an integral equation. We finally show that free commutative \match Rota-Baxter algebras give the free \match Zinbiel algebra, generalizing the characterization of the shuffle product algebra as the free Zinbiel algebra obtained by Loday.
\end{abstract}

\subjclass[2010]{
13A99, 
16W99, 
16S10, 
08B20, 
45N05 
}

\keywords{matching Rota-Baxter algebra, matching dendriform algebra, shuffle product with decoration, matching Zinbiel algebra}
\maketitle

\tableofcontents

\setcounter{section}{0}

\allowdisplaybreaks

\section{Introduction}
In view of applications to Volterra integral equations with multiple kernels, this paper gives an explicit construction of free commutative \match Rota-Baxter algebras in both the absolute and relative contexts. The free commutative \match Rota-Baxter algebras also give the free object in the category of \match Zinbiel algebras.

\subsection{Integrations, Rota-Baxter algebras and shuffle products}
Algebraic approaches of integrals have led to the notions of the Rota-Baxter algebra and the shuffle product.

The notion of a Rota-Baxter algebra has its origin in the probability study of G. Baxter on fluctuation theory in probability~\mcite{Bax}. It is a pair $(R,P)$ where $R$ is an associative algebra and $P$ is a linear operator on $R$ satisfying the {\bf Rota-Baxter identity}
\begin{equation}
P(x)P(y)=P(xP(y))+P(P(x)y)+\lambda P(xy) \qforall x, y\in R.
\notag \mlabel{eq:rbo}
\end{equation}
Here $\lambda$ is a fixed scalar in the base ring. Even though the primary interest of Baxter was when $\lambda=-1$, he observed that the equation when $\lambda=0$ is the integration by parts formula for the {\bf simple Riemann integral} (here simple means having a trivial kernel)
\begin{equation}
I(f)(x):=\int_0^x f(t)\,dt
\mlabel{eq:riem}
\end{equation}
on continuous functions. Thus when $\lambda=0$, the pair $(R,P)$ can be called an integral algebra as an integral analog of the differential algebra~\mcite{Ko,PS, Ri} originated from the algebraic study of differential equations. The free differential algebra, realized as the differential polynomial algebra, provides a uniform setting to consider all differential equations, just like what the polynomial algebra provides for algebraic equations.
Indeed a large part of differential algebra is centered on developing an algebraic theory of differential equations in parallel to the theories of algebraic equations, including the corresponding Galois theory, algebraic groups and algorithms.

The recent decades witnessed a tremendous development of the Rota-Baxter algebra through its applications and connections in diverse areas in mathematics and physics~\mcite{Agu00,BBGN,BGN0,CK,EGM,Gub,GK,Ro}. However, much still needs to be done about Rota-Baxter algebras of weight zero as an algebraic framework for integral equations. In this regard, free commutative Rota-Baxter algebras of weight zero should serve as the universal space for integral equations with the simple integral operator in Eq.~\meqref{eq:riem}, similar to the role served by free differential algebras for differential equations. For some related literature, see~\mcite{BLRUV,GGR,RR}.

Incidently, shortly before Baxter introduced the algebraic formulation of the integration by parts formula that later bore his name, another notion also emerged in the algebraic study of integration, namely the one of the shuffle product. A fundamental notion in many areas from combinatorics to Hopf algebras and free Lie algebras~\mcite{EM,Ree,Reu}, the shuffle product has been the key in the study of integrals from various viewpoints. One is the theory of iterated path integrals invented by K.~T. Chen who, as a prominent differential geometer, pursued this subject throughout his career~\mcite{Ch1,Ch2,Ch3,CFL}.
The subject has since become an important tool in various branches of algebraic geometry, topology, number theory and mathematical physics~\mcite{Kr}.

In number theory, through the integral representations of multiple zeta values by Kontsevich, the shuffle product, together with the quasi-shuffle product, provides the algebraic study of multiple zeta values with the (extended) double shuffle framework that conjecturally dictates all the algebraic relations among multiple zeta values~\mcite{Br,GX,Ho1,Ho2,IKZ}.

On the categorical level, the ubiquitous role of the shuffle product algebra is manifested algebraically by the fact that the shuffle product algebra is the free Zinbiel algebra which is an equivalent form of the free commutative dendriform algebra~\mcite{Losh}.

Relating these two algebraic interpretations of the simple integral operator, the free commutative Rota-Baxter algebras of weight zero naturally builds on the shuffle product algebra, while commutative Rota-Baxter algebras in general give rise to Zinbiel algebras~\mcite{Agu00}.

\subsection{Multiple Rota-Baxter algebras and shuffle products with decorations}

With the simple integral operator in Eq.~\meqref{eq:riem} as the inceptive case, there are many other integral operators including the various Fredholm operators and Volterra operators~\mcite{Ze}. Further there can be different integral operators appearing in the same integral equation.
This presents the need to consider integral type or Rota-Baxter type algebras with multiple operators, leading to the subject of study of this paper.

Thus the purpose of this paper is twofold. On the one hand, we construct free commutative \match Rota-Baxter algebras as an algebraic framework to consider integral equations with multiple integral operators of Rota-Baxter nature. There we need to consider two cases. To begin with, the free commutative \match Rota-Baxter algebra is generated on an algebra. Further, since in an integral equation, the integral operators also act on the functions in the base algebra, there is the need to construct free \match Rota-Baxter algebras for which the generating algebra already has \match Rota-Baxter operators. The constructions are based on the shuffle product algebra from multiple decorations.

On the other hand, in analogous to Zinbiel algebras as the category in which to characterize the shuffle product algebra by a universal property~\mcite{Losh}, we introduce \match Zinbiel algebras in order to provide a suitable category for the universal property for the shuffle product algebra from multiple decorations.

With these goals in mind, here is the layout of the paper.
In Section~\mref{sec:freemrba}, we first recall notions and basic examples of \match Rota-Baxter algebras and the construction of free commutative Rota-Baxter algebra by the shuffle product. We then construct free commutative \match Rota-Baxter algebras by apply the shuffle product to an algebra with the multiple operators as decorations (Theorem~\mref{thm:comfree}).

In Section~\mref{sec:freerrba}, we introduce the notion of a relative Rota-Baxter algebra for the need of integral equations where the coefficient functions already carry an integral operator, as noted above. As in the case of commutative algebras, here relative means we only consider \match Rota-Baxter algebras on a given base \match Rota-Baxter algebra. We then construct free \match Rota-Baxter algebras in the relative context (Theorem~\mref{thm:freermrba}). The free object is a subalgebra of the free object in the non-relative case in Section~\mref{sec:freemrba}, but the linear operators have to be dealt with carefully in verifying that they satisfy the required conditions.

In Section~\mref{sec:mzinb}, we consider commutative dendriform algebras and its equivalent notion of Zinbiel algebras in the \match or multiple context. We then characterize the free commutative \match Rota-Baxter algebras from shuffle product with decorations in Section~\mref{sec:freemrba} as the free objects in the category of \match Zinbiel algebras (Theorem~\mref{thm:comu}), generalizing the shuffle product algebra construction of free Zinbiel algebras of Loday~\mcite{Losh}.

Other aspect of matching Rota-Baxter algebras were studied in~\mcite{GGZ,ZGG} with motivation from multiple pre-Lie algebras arising from the recent work~\mcite{BHZ,Foi18} on algebraic renormalization of regularity structures and polarized associative Yang-Baxter equations~\mcite{Agu00,Brz}. Another related study, on a more systematic algebraic approach to integral equations, can be found in~\mcite{GGL}. It is also worth noting that quite much progress has been made in another algebraic structure with multiple Rota-Baxter operators, called the Rota-Baxter family algebra with origin in a Lie theoretic approach to renormalization~\cite{EGP07,Gop}. This includes the constructions of free objects, related families of dendriform algebras and pre-Lie algebras, and the generalizations from the perspectives of monoidal categories and algebraic operad~\cite{Ag,Foi18,Foi20,FMZ,MZ,ZGM}.
\smallskip

\noindent {\bf Notation.}
Throughout this paper, let $\bfk$ be a unitary commutative ring
which will be the base ring of all modules, algebras,  tensor products, as well as linear maps, unless otherwise stated.
By an algebra we mean an associative unitary (\bfk-)algebra.

\section{Free commutative \match Rota-Baxter algebras}
\mlabel{sec:freemrba}
In this section, we first recall the concept of a matching Rota-Baxter algebra
and the construction of free commutative Rota-Baxter algebras utilizing the shuffle product.
We then apply the shuffle product to an algebra with multiple decorations in order to construct free commutative matching Rota-Baxter algebras.

\subsection{Matching Rota-Baxter algebras }
Let us recall the concept of  matching Rota-Baxter algebras~\mcite{ZGG} as a generalization of Rota-Baxter algebras.

\begin{defn}
Let $\Omega$ be a nonempty set and
$\lambda_\Omega:=(\lambda_\omega)_{\omega\in \Omega}$
a family of elements of $\bfk$ parameterized by $\Omega$. Equivalently, $\lambda_\Omega$ is a map $\lambda_\Omega: \Omega\to \bfk$. A {\bf matching \rba} (or simply {\bf MRBA}) of weight $\lambda_\Omega$ is a pair $(R, P_\Omega)$ consisting of an algebra $R$ and a family
$P_\Omega:=(P_\omega)_{\omega\in \Omega}$ of linear operators
$P_\omega: R\longrightarrow R, \omega\in \Omega\,$
that satisfy the {\bf matching Rota-Baxter equation}
\begin{align}
P_\alpha(x)P_{\beta}(y)&=P_{\alpha}\big(xP_{\beta}(y)\big) +P_{\beta}\big(P_{\alpha}(x)y\big)+\lambda_\beta P_{\alpha}(xy)
\, \text{ for }\, x,y \in R \text{ and } \alpha,\beta\in \Omega\,.
\notag
\end{align}
\end{defn}

For each $\omega\in \Omega$, $(R, P_\omega)$ is a Rota-Baxter algebra of weight $\lambda_\omega$. When $\lambda_\Omega=\{\lambda\}$,
that is, when the map $\lambda_\Omega: \Omega\to \bfk$ is a constant function, we also call the \mrba $(R,\lambda_\Omega)$ to have weight $\lambda$.
\begin{defn}
Let $(R,\, P_\Omega)$ and $(R',\,P'_\Omega)$ be \mrbas of the same weight $\lambda_\Omega$.
A linear map $f  : R\rightarrow R'$ is called an {\bf \mrba homomorphism} if $f $ is an algebra homomorphism  such that $f   P_\omega = P'_\omega f $ for all $\omega\in \Omega$.
\end{defn}
A natural example of Rota-Baxter operator of weight 0 is the operator of Riemann integral in Eq.~\meqref{eq:riem}. Consider the $\RR$-algebra $R:=\mathrm{Cont}(\RR)$ of continuous functions on $\RR$. Then $(R,I)$ is a Rota-Baxter algebra of weight 0~\mcite{Bax}.
We generalize this to the multiple case.
\begin{exam}
Fix a family $k_\omega(x)$ of functions (called kernels~\mcite{Ze}) in $R$ parameterized by $\omega\in \Omega$. Define the Volterra integral operators
\begin{equation} I_\omega: R\longrightarrow R, \quad
f(x)\mapsto \int_0^x k_\omega(t)f(t)\,dt, \quad \omega \in \Omega.
\mlabel{eq:mint}
\end{equation}

Note that $I_\omega(f)=I(k_\omega f)$ for the integral operator $I$ in Eq.~\eqref{eq:riem}.
Then from the Rota-Baxter property of $I$, we obtain, for $\alpha, \beta\in \Omega$ and $f, g\in R$,
$$
I_\alpha(f)I_\beta(g)
= I(k_\alpha f)I(k_\beta g) = I(k_\alpha f I(k_\beta g))+I(I(k_\alpha f) k_\beta g) = I_\alpha(f I_\beta(g)) + I_\beta (I_\alpha (f) g).
$$
Thus $(R,(I_\omega)_{\omega\in \Omega})$ is a \match \rba of weight zero. Understanding integral equations with such a family of Volterra operators is our main motivation in the construction of free commutative \mrbas, especially in the relative context (Section~\mref{sec:freerrba}). See~\mcite{GGL} for further study of Volterra operators and integral equations from an algebraic point of view.
\mlabel{ex:Zin3}
\end{exam}

Here are some further examples and properties of \mrbas~\mcite{ZGG}.
\begin{remark}
\begin{enumerate}
\item
Let $(R,P_\Omega)$ be an \mrba of weight $\lambda_\Omega$. For any linear combination
\begin{align*}
P:=\sum_{\omega\in \Omega} a_\omega P_\omega, \quad a_{\omega} \in \bfk,
\end{align*}
with finite support, the pair $(R, P)$ is a Rota-Baxter algebra of weight $\sum_{\omega\in \Omega} a_{\omega}\lambda_\omega$.

\item  \Match Rota-Baxter algebras have a close connection with \match pre-Lie algebras introduced by Foissy~\mcite{Foi18}.
Let $(R, P_\Omega)$ be a \mrba of weight $\lambda_\Omega$.  Define
\begin{align*}
x \ast_\omega y:= P_{\omega}(x)y-yP_{\omega}(x)-\lambda_\omega yx\, \text{ for } x,y,z\in R, \omega\in \Omega.
\end{align*}
Then the pair $(R, (\ast_\omega)_{\omega\in \Omega})$ is a \match pre-Lie algebra.

\item
For $r, s\in R\ot R$, let
$$ r_{13} s_{12} - r_{12}s_{23}+r_{23}s_{13}=-\lambda s_{13}$$
be the {\bf polarized associative Yang-Baxter equation} of weight $\lambda$. Then a solution of this equation gives a matching Rota-Baxter operator of weight $\lambda$. See~\mcite{ZGG} for details.
\end{enumerate}
\mlabel{remk:MRB}
\end{remark}

\subsection{ Free commutative \match Rota-Baxter algebras on an algebra}
We first recall the notion of shuffle product algebras and its application to the construction of free commutative Rota-Baxter algebras~\mcite{Gub,GK}. Then this process is extended to the construction of free commutative \mrbas.

Let $V$ be a $\bfk$-module. Let $\Shu(V)$ be the shuffle product algebra on $V$. So the underlying module of $\Shu(V)$ is the same as the tensor algebra
$$T(V):=\bigoplus_{k\geq 0}V^{\otimes k}=\bfk \oplus V \oplus V^{\ot 2}\oplus \cdots$$
but the multiplication is the shuffle product $\shap$~\cite{EM,GK}.
The product $\shap$ can be defined recursively:
For $\fraka=a_1\ot \fraka'\in V^{\ot m}$ and $\frakb=b_1\ot \frakb'\in V^{\ot n}$ with $\fraka'\in V^{\ot (m-1)}$ and $\frakb'\in V^{\ot (n-1)}$, we have
\begin{equation}
\fraka \shap \frakb=a_1\otimes(\fraka'\shap \frakb)+b_1\otimes (\fraka\shap \frakb'),
\mlabel{eq:shrec}
\end{equation}
with the usual convention that, if $m=1$ or $n=1$, then $\fraka'$ or $\frakb'$ is the identity in $\bfk$.

Now let $A$ be a commutative algebra.
We define
\begin{equation}
\sha(A):= A\otimes\Shu(A)=\bigoplus_{k\geq 1}A^{\otimes k}
\end{equation}
to be the tensor product algebra whose product $\diamond$ is called  the {\bf augmented  shuffle product}. More precisely, $\diamond$ is defined by
$$(a_0\ot \fraka) \diamond (b_0\ot \frakb) := (a_0b_0) \ot (\fraka\ssha \frakb)\,\text{ for }\, a_0, b_0\in A, \fraka, \frakb\in \Shu(A).$$
Define
$$P: \sha(A) \to \sha(A), \quad \fraka \mapsto P(\fraka)= 1\ot \fraka. $$
Then by~\mcite{GK}, the triple $(\sha(A), \diamond, P)$ is the free commutative Rota-Baxter algebra of weight zero  on the commutative algebra $A$.

\begin{defn}
Let  $\Omega$ be a nonempty set   and $A$ a commutative algebra. A {\bf free commutative \mrba}
of weight zero on $A$ is a commutative \mrba $\frbf(A)$ of weight zero together
with an algebra homomorphism $j_{A}:A \rightarrow \frbf(A)$ such that for any
commutative \mrba $(R, (P_{\omega, \, R})_{\omega \in \Omega})$
of weight zero and any algebra homomorphism $f: A \rightarrow R,$ there is a unique \mrba homomorphism
$\bar{f}: \frbf(A) \rightarrow R$ such that $f=\bar{f}  j_{A}.$
\end{defn}

We now construct a free commutative \mrba by a shuffle product algebra with decorations in the sense that the base algebra is extended to carry decorations from the multiple Rota-Baxter operators.\footnote{We thank the referee for pointing out that the decorated shuffle product in a previous version of the paper could be realized as the shuffle product on the module $\bfk\Omega\ot A$.}

Consider $\bfk \Omega \ot A$ which is simply the free $A$-module $A\Omega$, but we will keep the former notation to better separate the roles played by $A$ and $\Omega$. Define the shuffle product algebra
\begin{align}\mlabel{eq:shaplus}
\Shu(\bfk\Omega\ot A):=\bfk \oplus (\bfk \Omega \ot A)  \oplus (\bfk \Omega \ot A)^{\ot 2}\oplus \cdots,
\end{align}
that we also refer to as the {\bf shuffle product algebra with decorations} to emphasize that the generating module has a set $\Omega$ of decorations.
In order to simplify the notations and to avoid confusion among the multiple functions of the tensor symbol $\otimes$, we will use the column notation $\twovec{\omega}{a}$ for a pure tensor $\omega\ot a$ in $\bfk \Omega \ot A$ and use the symbol $\tot$ to denote the tensor symbol in the tensor power $(\bfk \Omega \ot A)^{\ot k}$. Thus we denote

\begin{equation}
\twovec{\omega_1}{a_1} \tot \twovec{\omega_2}{a_2}\tot \cdots \tot\twovec{\omega_k}{a_k}:=(\omega_1\ot a_1)\ot  \cdots \ot (\omega_k \ot a_k)\in (\bfk \Omega \ot A)^{\otimes k} \text{ for } k\geq 1.
\notag\mlabel{newds1}
\end{equation}

Further define
\begin{equation}\mlabel{eq:pureaug2b}
\msha(A):= A\otimes\Shu(\bfk\Omega\ot A)=A \bigoplus \left(\bigoplus_{k\geq 1}  \left( A \ot (\bfk \Omega \ot A)^{\otimes k}\right)\right)
\end{equation}
to be the tensor product algebra whose product is denoted by $\diamond$.
Note the difference of this notion with $\sha(\bfk\Omega \ot A):=(\bfk\Omega\ot A)\ot \Shu(\bfk\Omega\ot A)$ in the construction of free Rota-Baxter algebras of weight zero.

A pure tensor $\fraka
\in \msha(A)$ is of the form
\begin{align} \mlabel{eq:nota1}
\fraka =a_0\ot \frak{\bar a}= a_0\ot \twovec{\alpha_1}{a_1} \tot \bar{\fraka}',
\end{align}
where $a_0\in A$, $\frak{\bar a}\in A_\Omega^{\ot m}$ which is expressed as $\twovec{\alpha_1}{a_1} \tot \bar{\fraka}'$ for $a_1\in A, \alpha_1\in \Omega$ and $\bar{\fraka}'\in A_\Omega^{\ot (m-1)}, m\geq 1$ with the same convention as for the shuffle product in Eq.~\meqref{eq:shrec} that when $m=1$, $\bar{\fraka}'$ is the identity in $\bfk$.
Then together with another pure tensor
\begin{align}
\frakb:=\ b_0\ot \frak{\bar b} :=\ b_0\ot \twovec{\beta_1}{b_1}\tot \bar{\frakb}' := b_0\ot \twovec{\beta_1}{b_1}\tot \twovec{\beta_2} {b_2} \tot\cdots  \tot  \twovec{\beta_n} {b_n} \in A\ot (\bfk \Omega \ot A)^{\ot n} \subseteq \msha(A)
\label{eq:nota2}
\end{align}
with $n\geq 0$, we have
\begin{align}
\fraka \sham \frakb
= a_0b_0 \ot\left( \frak{\bar a} \shap  \frak{\bar b} \right)=a_0b_0 \ot\left(\,
\twovec{\alpha_1}{a_1} \tot\cdots\tot
\twovec{\alpha_m}{a_m}
\shap
\twovec{\beta_1}{b_1} \tot\cdots\tot \twovec{\beta_n}{b_n}
\right).
\mlabel{eq:matdia}
\end{align}

For $\omega\in \Omega$ and a pure tensor $\fraka=a_0\ot \bar{\fraka}\in \msha(A)$ expressed in the form of Eq.~\eqref{eq:nota1}, the expression $1 \ot \twovec{\omega}{a_0}
\tot \frak{\bar a}$ is well defined in $\msha(A)$. Thus we might define the map
\begin{equation}
P_\omega:= P_{\omega, \,A}: \msha(A) \rightarrow \msha(A),\quad \fraka=a_0\ot \frak{\bar a} \mapsto 1 \ot \twovec{\omega}{a_0}
\tot \frak{\bar a}.
\mlabel{eq:op}
\end{equation}

\begin{theorem}
Let  $\Omega$ be a \nes~  and $A$ a commutative algebra.
Then the triple $(\msha(A), \sham,\\\Po)$, together with the
natural embedding $j_{A}:A\rightarrow \msha(A)$, is the free commutative \mrba of weight zero on $A$.
\mlabel{thm:comfree}
\end{theorem}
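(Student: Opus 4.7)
The plan is to split the proof into two parts: first, verifying that $(\msha(A), \sham, P_\Omega)$ is itself a commutative matching Rota-Baxter algebra of weight zero; second, verifying its universal property.

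Associativity of $\sham$ is automatic since $\msha(A) = A \otimes \Shu(\bfk\Omega \otimes A)$ is a tensor product of associative algebras, and commutativity of $\sham$ follows from the commutativity of $A$ together with that of the shuffle product $\shap$. The content of the first part is therefore the matching Rota-Baxter identity of weight zero: for pure tensors $\fraka = a_0 \ot \bar{\fraka}$ and $\frakb = b_0 \ot \bar{\frakb}$ and $\alpha, \beta \in \Omega$, one has
$$P_\alpha(\fraka) \sham P_\beta(\frakb) = 1 \ot \left( \twovec{\alpha}{a_0} \tot \bar{\fraka} \right) \shap \left( \twovec{\beta}{b_0} \tot \bar{\frakb} \right),$$
and a single expansion of the shuffle product via the recursion in Eq.~\meqref{eq:shrec} produces exactly the two summands $P_\alpha(\fraka \sham P_\beta(\frakb))$ and $P_\beta(P_\alpha(\fraka) \sham \frakb)$.

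For the universal property, given a commutative MRBA $(R, (P_{\omega,R})_{\omega \in \Omega})$ of weight zero and an algebra homomorphism $f: A \to R$, the essential observation is that every pure tensor admits the decomposition
$$a_0 \ot \twovec{\alpha_1}{a_1} \tot \cdots \tot \twovec{\alpha_n}{a_n} = j_A(a_0) \sham P_{\alpha_1}\left( a_1 \ot \twovec{\alpha_2}{a_2} \tot \cdots \tot \twovec{\alpha_n}{a_n} \right).$$
This forces any candidate MRBA homomorphism extending $f$ to satisfy $\bar{f}(a_0) = f(a_0)$ for $a_0 \in A$ and the recursive formula $\bar{f}(a_0 \ot \twovec{\alpha_1}{a_1} \tot \bar{\fraka}'') = f(a_0) \cdot_R P_{\alpha_1,R}(\bar{f}(a_1 \ot \bar{\fraka}''))$, which settles uniqueness. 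Well-definedness is immediate via multilinearity in each tensor slot, and the relation $\bar{f} \circ P_\omega = P_{\omega,R} \circ \bar{f}$ is built into the definition.

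The main obstacle is proving multiplicativity $\bar{f}(\fraka \sham \frakb) = \bar{f}(\fraka) \cdot_R \bar{f}(\frakb)$. The plan is to induct on the total decoration count $m + n$, where $m$ and $n$ denote the numbers of $\Omega$-labels appearing in $\fraka$ and $\frakb$. The base case $m = 0$ or $n = 0$ reduces to $f$ being an algebra homomorphism on $A$. For the inductive step, write $\fraka = j_A(a_0) \sham P_{\alpha_1}(\fraku)$ and $\frakb = j_A(b_0) \sham P_{\beta_1}(\frakv)$, where $\fraku$ and $\frakv$ have strictly smaller decoration counts $m-1$ and $n-1$ respectively. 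Then apply the matching Rota-Baxter identity of weight zero in $R$ to expand
$$P_{\alpha_1,R}(\bar{f}(\fraku)) \cdot_R P_{\beta_1,R}(\bar{f}(\frakv)) = P_{\alpha_1,R}\bigl(\bar{f}(\fraku) \cdot_R P_{\beta_1,R}(\bar{f}(\frakv))\bigr) + P_{\beta_1,R}\bigl(P_{\alpha_1,R}(\bar{f}(\fraku)) \cdot_R \bar{f}(\frakv)\bigr),$$
use the inductive hypothesis on the inner products $\fraku \sham P_{\beta_1}(\frakv)$ and $P_{\alpha_1}(\fraku) \sham \frakv$ (each pair having total decoration count $m + n - 1$) together with $\bar{f} \circ P_\omega = P_{\omega,R} \circ \bar{f}$, and finally invoke the matching Rota-Baxter identity of weight zero in $\msha(A)$ established in the first part to match the result with $\bar{f}(\fraka \sham \frakb)$. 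The delicate point is bookkeeping the decoration count to ensure the inductive hypothesis applies to both resulting terms, and ensuring that the two matching Rota-Baxter identities (in $R$ and in $\msha(A)$) are aligned via the same pair of operator labels $\alpha_1, \beta_1$.
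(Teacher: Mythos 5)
Your proposal is correct and follows essentially the same route as the paper: verifying the matching Rota-Baxter identity via the shuffle recursion, defining $\bar{f}$ recursively by $\bar{f}(a_0\ot \twovec{\alpha_1}{a_1}\tot\bar{\fraka}')=f(a_0)P_{\alpha_1,R}(\bar{f}(a_1\ot\bar{\fraka}'))$, deducing uniqueness from the decomposition $a_0P_{\alpha_1}(a_1P_{\alpha_2}(\cdots))$, and proving multiplicativity by induction on $m+n$ using the compatibility $\bar{f}P_\omega=P_{\omega,R}\bar{f}$ together with the weight-zero identity in both $R$ and $\msha(A)$. The only difference is cosmetic (you run the chain of equalities from $\bar{f}(\fraka)\bar{f}(\frakb)$ toward $\bar{f}(\fraka\sham\frakb)$ rather than the reverse), and the bookkeeping of decoration counts you flag as delicate works out exactly as in the paper.
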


\begin{proof}
We first show that $(\msha(A), \sham,  \Po)$ is a commutative \mrba of weight zero on $A$. By Eq.~\eqref{eq:matdia},  $(\msha(A), \sham)$ is a commutative algebra. So we only need to verify that $\Po$  satisfy the \match Rota-Baxter equation in the case of  $\lambda=0$.
For this we use Eq.~(\mref{eq:nota1}) and Eq.~(\mref{eq:nota2}) and check,
for $\alpha, \beta \in \Omega$ and pure tensors $\fraka, \frakb\in \sha_\Omega(A),$
we have
\begin{equation}
\begin{aligned}
& P_{\alpha }(\mathfrak{ a}) \sham P_{\beta }(\mathfrak{ b})\\
=&\ \left( 1 \ot \twovec{\alpha}{a_0}
\tot \frak{\bar a}\right)
\sham \left( 1 \ot \twovec{\beta}{b_0}
\tot \frak{\bar b}\right)  \quad (\text{by Eq.~(\mref{eq:op} }))\\
=&\ 1 \ot\Bigg(\left( \twovec{\alpha}{a_0}
\ot \frak{\bar a}\right)
\shap \left(  \twovec{\beta}{b_0}
\tot \frak{\bar b}\right) \Bigg) \quad (\text{by Eq.~(\mref{eq:matdia} }))\\
=&\ 1 \ot\twovec{\alpha}{a_0}\tot \left(
\frak{\bar a}
\shap \left(  \twovec{\beta}{b_0}
\tot \frak{\bar b}\right)\right)
+1 \ot \twovec{\beta}{b_0}\tot\left( \left(\twovec{\alpha}{a_0}
\tot \frak{\bar a}\right)
\shap \frak{\bar b} \right) \quad (\text{by Eq.~(\mref{eq:shrec} }))\\
=&\ P_{\alpha}\left({a_0}\ot \left(
\frak{\bar a}
\shap \left(  \twovec{\beta}{b_0}
\tot \frak{\bar b}\right)\right)\right)
+P_{\beta}\left({b_0}\ot\left( \left(\twovec{\alpha}{a_0}
\tot \frak{\bar a}\right)
\shap \frak{\bar b} \right)\right)  \quad (\text{by Eq.~(\mref{eq:op} }))\\
=&\ P_{\alpha}\left(({a_0}\ot \frak{\bar a}) \sham
 \left( 1\ot \twovec{\beta}{b_0}
\tot \frak{\bar b}\right)\right)
+P_{\beta}\left( \left(1\ot \twovec{\alpha}{a_0}
\tot \frak{\bar a}\right)
\sham (b_0\ot\frak{\bar b})\right)\quad (\text{by Eq.~(\mref{eq:matdia} }))\\
=&\  P_{\alpha}({\mathfrak{ a}} \sham P_{\beta }({b_0}
\ot \frak{\bar b}))
+P_{\beta}(P_{\alpha }({a_0}
\ot \frak{\bar a})\sham\mathfrak{b})\quad (\text{by Eq.~(\mref{eq:op} }))\\
=&\  P_{\alpha}({\mathfrak{ a}} \sham P_{\beta }(\mathfrak{ b}))
+P_{\beta}(P_{\alpha }({ \mathfrak{ a}})\sham \mathfrak{ b}).
\end{aligned}
\label{eq:cm}
\end{equation}

It remains to verify  the universal property of $(\msha(A),\, \Po)$. Let $(R, \, (P_{\omega,\,R })_{\omega\in \Omega})$ be a commutative \mrba  of weight zero and let
$f: A \rightarrow R$ be an algebra homomorphism. We construct a unique $\bar{f}: \msha(A) \to R$ with the desired universal property.

({\bf Existence}). To construct a linear map $\bar{f}: \msha(A) \rightarrow R$, it suffices to define $\bar{f}(\fraka)$ for pure tensor $\fraka=a_0\ot \bar{\fraka}\in A\ot (\bfk \Omega \ot A)^{\ot m}$,
with $m\geq 0$, $a_0\in A$ and $\bar{\fraka}=\twovec{\alpha_1}{a_1} \tot\bar{\fraka}'\in (\bfk \Omega \ot A)^{\ot m}$. For this we employ the induction on $m\geq 0$. For the initial step of $m=0$, we have $\fraka =a_0$
and define
\begin{equation} \mlabel{eq:barf0}
\bar{f}(a_0):=  f(a_0).
\end{equation}
For clarity, let us also treat the case of $m=1$ separately. Here $\fraka=a_0\ot \twovec{\alpha_1}{a_1}$ and we define
$$ \bar{f}(\fraka):= f(a_0)P_{\alpha,R}(a_1).$$
For the induction step for $m\geq 2$, we have
$\fraka=a_0\ot \twovec{\alpha_1}{a_1} \tot\bar{\fraka}'$.
We then define
\begin{equation}
\bar{f}(\mathfrak{a}) =\bar{f}\left(a_0\ot \twovec{\alpha_1}{a_1} \tot\bar{\fraka}'\right) := f(a_{0})P_{\alpha_1,\, R}(\bar{f}(a_1 \ot \bar{\fraka}')).
\mlabel{eq:fpp}
\end{equation}
Then for $\fraka\in \msha(A)$ and $\omega\in \Omega$,
$$(\bar{f} P_{\omega})(\fraka)=\bar{f}\left( P_{\omega}(\fraka)\right)
=\bar{f}(1 \ot \twovec{\omega}{a_0}\tot \frak{\bar a})
=f(1)P_{\omega ,\,R}(\bar{f}(a_0\ot \frak{\bar a}))=P_{\omega,\,R}(\bar{f}(\fraka))=(P_{\omega,\,R }  \bar{f})(\fraka).$$
Hence
\begin{equation}
\bar{f}  P_{\omega}=P_{\omega,\,R }  \bar{f} \, \text{ for }\, \omega\in \Omega.
\mlabel{eq:compo}
\end{equation}
We next prove the multiplicativity of $\bar{f}$:
\begin{equation}
 \bar{f}({\mathfrak{a}}\sham{\mathfrak{b}})=\bar{f}({\mathfrak{a}})\bar{f}({\mathfrak{b}})
\mlabel{eq:ff}
\end{equation}
for  $\mathfrak{a}=a_0\ot \twovec{\alpha_1}{a_1} \tot\bar{\fraka}' \in A\ot (\bfk \Omega \ot A)^{\ot m}$ and  $\mathfrak{b}=b_0\ot \twovec{\beta_1}{b_1} \tot\bar{\frakb}' \in A\ot (\bfk \Omega \ot A)^{\ot n}$,
by induction on $m+n\geq 0$. The initial step of $m=n=0$ follows from Eq.~\meqref{eq:barf0} and the multiplicativity of $f$.
Suppose that Eq.~(\mref{eq:ff}) has been verified for $m+n \leq k$ with $k\geq 0$, and consider the case of $m+n = k+1$.
Then we  have
\begin{align*}
&\ \bar{f}(\fraka\rsham \frakb)\\
=&\ \bar{f}\left(\left(a_0\ot \twovec{\alpha_1}{a_1}\tot \bar{\frak a}'\right) \rsham \left(b_0 \ot \twovec{\beta_1}{b_1}\tot \bar{\frak b}'\right)\right)\\
=&\ \bar{f}\left(a_0 b_0 \ot \left(\left( \twovec{\alpha_1}{a_1}\tot \bar{\frak a}'\right) \shap \left( \twovec{\beta_1}{b_1}\tot \bar{\frak b}'\right)\right)\right)\quad (\text{by Eq.~(\mref{eq:matdia})})\\
=&\ \bar{f}\left(a_0 b_0 \ot \twovec{\alpha_1}{a_1} \tot\left(\bar{\frak a}'\shap \left( \twovec{\beta_1}{b_1}\tot \bar{\frak b}'\right) \right)\right)+ \bar{f}\left(a_0 b_0 \ot \twovec{\beta_1}{b_1}\tot \left(\left( \twovec{\alpha_1}{a_1}\tot \bar{\frak a}'\right)\shap \bar{\frak b}'\right)\right)\quad (\text{by Eq.~(\mref{eq:shrec})})\\
=&\ f(a_0b_0) P_{\alpha_1, R}\bar{f}\left(a_1\ot \left(\bar{\fraka}'\shap \left( \twovec{\beta_1}{b_1}\tot \bar{\frak b}'\right)\right)\right)
+f(a_0b_0) P_{\beta_1, R}\bar{f}\left( b_1\ot \left( \left( \twovec{\alpha_1}{a_1}\tot \bar{\frak a}'\right) \shap \bar{\frakb}'\right)\right)\\
&\hspace{8cm}(\text{by Eq.~(\mref{eq:fpp})})\\
=&\ f(a_0b_0) P_{\alpha_1, R}\bar{f}\left((a_1\ot \bar{\fraka}')\rsham  \left( 1\ot \twovec{\beta_1}{b_1}\tot \bar{\frak b}'\right)\right)
+f(a_0b_0) P_{\beta_1, R}\bar{f}\left(  \left( 1\ot \twovec{\alpha_1}{a_1}\tot \bar{\frak a}'\right)\rsham (b_1\ot  \bar{\frakb}')\right)\\
&\hspace{8cm} (\text{by Eq.~(\mref{eq:matdia})})\\
=&\ f(a_0b_0) P_{\alpha_1, R}\bar{f}(a_1\ot \bar{\fraka}')\bar{f} \left( 1\ot \twovec{\beta_1}{b_1}\tot \bar{\frak b}'\right)
+f(a_0b_0) P_{\beta_1, R}\bar{f}  \left( 1\ot \twovec{\alpha_1}{a_1}\tot \bar{\frak a}'\right)\bar{f} (b_1\ot  \bar{\frakb}')\\
&\hspace{8cm}(\text{by the induction hypothesis})\\
=&\ f(a_0b_0) P_{\alpha_1, R}\bar{f}(a_1\ot \bar{\fraka}')\bar{f} ( P_{\beta_1}(b_1\ot \bar{\frak b}'))
+f(a_0b_0) P_{\beta_1, R}\bar{f}(P_{\alpha_1}({a_1}\ot \bar{\frak a}'))\bar{f} (b_1\ot  \bar{\frakb}')\quad (\text{by Eq.~(\mref{eq:op})})\\
=&\ f(a_0b_0) P_{\alpha_1, R}\Big(\bar{f}(a_1\ot \bar{\fraka}')P_{\beta_1, R}(\bar{f}(b_1\ot \bar{\frak b}'))\Big)
+f(a_0b_0) P_{\beta_1, R}\Big(P_{\alpha_1, R}(\bar{f}(a_1\ot \bar{\fraka}'))\bar{f}(b_1\ot \bar{\frak b}')\Big)\\
&\hspace{8cm} (\text{by Eq.~(\mref{eq:compo})})\\
=&\ f(a_0b_0) P_{\alpha_1, R}(\bar{f}(a_1\ot \bar{\fraka}'))P_{\beta_1, R}(\bar{f}(b_1\ot \bar{\frak b}'))\ \ (\text{by $(R, P_{\Omega, R})$ being a \match Rota-Baxter algebra})\\
=&\ f(a_0)f(b_0) P_{\alpha_1, R}(\bar{f}(a_1\ot \bar{\fraka}'))P_{\beta_1, R}(\bar{f}(b_1\ot \bar{\frak b}'))\quad (\text{by $f$ being an algebra homomorphism})\\
=&\ \Big(f(a_0) P_{\alpha_1, R}(\bar{f}(a_1\ot \bar{\fraka}'))\Big)\Big(f(b_0)P_{\beta_1, R}(\bar{f}(b_1\ot \bar{\frak b}'))\Big)\quad (\text{by the commutativity})\\
=&\ \bar{f}({\fraka})\bar{f}(\frakb)\quad (\text{by Eq.~(\mref{eq:dfu})}).
\end{align*}
This completes the induction.

\smallskip

\noindent
({\bf Uniqueness}). In fact, for ${\mathfrak{a}}=a_0\ot \twovec{\alpha_1}{a_1} \tot \twovec{\alpha_2}{a_2}\tot   \cdots  \tot \twovec{\alpha_m}{a_m}  \in A\ot (\bfk \Omega \ot A)^{\ot m} $, since
$$a_0\ot \twovec{\alpha_1}{a_1} \tot \twovec{\alpha_2}{a_2}\tot   \cdots  \tot \twovec{\alpha_m}{a_m}
=a_0P_{\alpha_{1}}\left(a_{1}P_{\alpha_{2}}(a_{2}\cdots P_{\alpha_{m}}(a_{m})\cdots)\right),$$
in order for $\bar{f}$ to be a \match Rota-Baxter algebra homomorphism, we must have
\begin{equation*}
\bar{f}(\mathfrak{a})=f(a_{0})P_{\alpha_1,\, R} \left(f(a_{1})P_{\alpha_2,\,R } (f(a_{2}) \cdots P_{\alpha_m,\,R}(f(a_m))\cdots )\right).
\mlabel{eq:fp}
\end{equation*}
So the uniqueness of $\bar{f}$ is proved.
This completes the proof.
\end{proof}

Let $X$ be a set and ${\bfk}[X]$ the polynomial algebra on $X$ with the natural embedding $X\hookrightarrow \bfk [X]$. As a special case of Theorem~\mref{thm:comfree}, we obtain a free commutative \match Rota-Baxter  algebra of weight zero on a set.

\begin{coro}
Let $X$ be a set.
The \match Rota-Baxter  algebra $(\shao(\bfk[X]), (P_{\omega, \,\bfk[X]})_{\omega \in \Omega})$ of weight zero, together with the natural embedding
$$j_{X}: X \hookrightarrow {\bfk}[X]  \hookrightarrow  \shao(\bfk[X]),$$
is the free commutative \match Rota-Baxter  algebra of weight zero on $X$,
described by the following universal property: for any  commutative \mrba $(R, \, (P_{\omega,\, R})_{\omega\in \Omega})$ of weight zero and any set map $f: X \rightarrow R$,
there exists a unique  \match \mrba homomorphism $\free{f}: \shao(\bfk [X]) \rightarrow R$ such that
$f=\bar{f} j_X$.
\end{coro}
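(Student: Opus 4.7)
The plan is to derive this corollary as a direct consequence of Theorem~\ref{thm:comfree} together with the universal property of the polynomial algebra. Since $\bfk[X]$ is the free commutative algebra on the set $X$, and Theorem~\ref{thm:comfree} asserts that $(\msha(A), \sham, \Po)$ with $j_A$ is the free commutative matching Rota-Baxter algebra of weight zero on any commutative algebra $A$, specializing $A=\bfk[X]$ and then transporting the resulting universal property along the embedding $X\hookrightarrow \bfk[X]$ should yield exactly the claimed universal property on $X$.

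To make this precise, I would first establish existence. Given a commutative matching Rota-Baxter algebra $(R,(P_{\omega,R})_{\omega\in\Omega})$ of weight zero and a set map $f:X\to R$, I would use the universal property of the polynomial algebra to extend $f$ to a unique algebra homomorphism $\tilde f:\bfk[X]\to R$. Then Theorem~\ref{thm:comfree}, applied to $\tilde f$, provides a unique matching Rota-Baxter algebra homomorphism $\bar f:\shao(\bfk[X])\to R$ satisfying $\bar f\circ j_{\bfk[X]}=\tilde f$. Composing with $X\hookrightarrow\bfk[X]$ gives $\bar f\circ j_X=f$, as required.

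For uniqueness, I would observe that any matching Rota-Baxter algebra homomorphism $\bar g:\shao(\bfk[X])\to R$ with $\bar g\circ j_X=f$ restricts to an algebra homomorphism $\bfk[X]\to R$ extending $f$; by the uniqueness clause of the polynomial universal property this restriction must equal $\tilde f$, and then the uniqueness clause of Theorem~\ref{thm:comfree} forces $\bar g=\bar f$. There is no substantive obstacle in this argument: it is simply a clean concatenation of two free-object constructions, and no new compatibility needs to be verified beyond what is already encoded in those two universal properties.
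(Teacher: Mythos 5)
Your proof is correct and follows exactly the route the paper intends: the paper offers no separate argument for this corollary, treating it as an immediate consequence of Theorem~\ref{thm:comfree} combined with the universal property of $\bfk[X]$ as the free commutative algebra on $X$. Your explicit spelling-out of the existence and uniqueness steps via the composition of the two universal properties is the standard argument and contains no gaps.
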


\section{Free commutative relative matching Rota-Baxter algebras}
\mlabel{sec:freerrba}
With the application to integral equations~\mcite{GGL} in mind, we consider MRBAs in a relative context,
in the sense that the base ring is already an \mrba.

\subsection{Relative \mrbas and the construction of their free objects}
We begin with the following analog of algebras over a base ring.

\begin{defn}
Let $(\basf,\kappa_\Omega)$ be a fixed \mrba of weight $\lambda_\Omega$.
\begin{enumerate}
\item An {\bf $(\basf,\kappa_\Omega)$-\mrba} or simply an {\bf $(\basf,\kappa_\Omega)$-algebra} or, in general term, a relative \mrba, is an \mrba $(R,P_{\Omega, R})$ of weight $\lambda_\Omega$ together with an \mrba homomorphism $i=i_R:(\basf,\kappa_\Omega)\to (R,P_\Omega)$.

\item A homomorphism $f :(R,\, P_{\Omega,R})\to (R',\,P'_{\Omega,R'})$ of   $(\basf,\kappa_\Omega)$-algebras is
an $F$-algebra homomorphism $f  : R\rightarrow R'$ such that $$f  P_{\omega,\,R} = P'_{\omega,\,R'} f \,  \text{ for }\,  \omega\in \Omega.$$
\end{enumerate}
\end{defn}
As in the case of an $\basf$-algebra, the structure map $i_R$ is usually suppressed.

For an $(\basf,\kappa_\Omega)$-algebra $(R,P_\Omega)$, $R$ is an $\basf$-algebra and the following identity holds as a consequence of the \mrba homomorphism $i_R:(\basf,\kappa_\Omega)\to (R,P_\Omega)$.
\begin{align}
\kappa_\alpha(k)P_{\beta, R}(u) = P_{\alpha, R}\big( kP_{\beta, R}(u) \big)  + P_{\beta, R}\big(\kappa_\alpha(k)u\big)+ \lambda_{\beta} P_{\alpha, R}(k u) \, \text{ for }\, k\in \basf, u\in R, \alpha, \beta \in \Omega.
\label{eq:mrbm}
\end{align}

We note the presence to two base algebras $\bfk$ and $\basf$, where $\bfk$ is the ring of constants for linear maps and tensor products, while the Rota-Baxter operators $\kappa_\omega$ and $P_\omega, \omega \in \Omega,$ are not $\basf$-linear, as seen in the above equation.

The notion of a free $(\basf,\kappa_\Omega)$-\mrba is explicitly defined as follows.

\begin{defn}
Let $(\basf,\kappa_\Omega)$ be a fixed commutative \mrba of weight $\lambda_\Omega$ and  $A$ a commutative algebra.
A {\bf free $(\basf,\kappa_\Omega)$-\mrba on $A$} is an $(\basf,\kappa_\Omega)$-\mrba $(\frbf(A), P_{\Omega, A})$, together with an algebra homomorphism $j_{A}:A \rightarrow \frbf(A)$ satisfying the following universal property:
for any $(\basf,\kappa_\Omega)$-\mrba $(R, P_{\Omega, R})$ and any algebra homomorphism $f: A \rightarrow R,$
there is a unique $(\basf,\kappa_\Omega)$-\mrba homomorphism
$\bar{f}: \frbf(\basf, \kappa_\Omega) \rightarrow R$ such that $f=\bar{f} j_{A}.$
\end{defn}
When all the algebras involved are assumed to be commutative, we have the notion of a free commutative $(\basf,\kappa_\Omega)$-algebra on $A$.

Let $A$ be an augmented algebra. So $A$ can be taken as the unitization $A=\bfk\oplus A^+$ of an algebra $A^+$.
We will construct the free commutative relative \mrba of weight zero
on $A$. Let $(\basf,\kappa_\Omega)$ be a fixed
commutative \mrba of weight zero. Denote by
$$ \frakA:=\basf \ot A\,\text{ and }\, \frakA^+:= \basf \ot A^+.$$
So $\frakA=\basf \oplus \frakA^+$.

Recall the free commutative \mrba $\sha_\Omega(\frakA)$
of weight zero on $\frakA$ introduced in Eq.~\meqref{eq:pureaug2b}:
\begin{equation}
\msha(\frakA)= \frakA\ot \Shu(\bfk \Omega\ot \frakA)
=\bigoplus_{k\geq 0} \frakA \ot (\bfk\Omega\ot \frakA)^{\ot k}.
\notag\mlabel{eq:fmrbafa}
\end{equation}

Consider the shuffle product algebra
$\Shu(\bfk\Omega\ot \frakA^+)$ which is naturally a subalgebra of the shuffle product algebra with decorations $\Shu(\bfk\Omega\ot \frakA)$. Then we obtain the subalgebra
\begin{equation}
\mrsha(A):= \frakA\ot \Shu(\bfk\Omega \ot \frakA^+)=\frakA\ot \left(\bigoplus_{k\geq 0} (\bfk\Omega\ot \frakA^+)^{\ot k}\right)
=\frakA\oplus \left(\bigoplus_{k\geq 1} \frakA \ot (\bfk\Omega \ot\frakA^+)^{\ot k}\right)
\mlabel{eq:relsha2}
\end{equation}
of the free \mrba $\msha(\frakA)$ on $\frakA$.
Note that
$\mrsha(A)$ is spanned by pure tensors of the form
$$u_0\ot ({\omega_1} \ot u_1)\ot  \cdots \ot ({\omega_k}\ot u_k)=u_0\ot \twovec{\omega_1}{u_1}\tot \cdots \tot \twovec{\omega_k}{u_k}, \, u_0\in \frakA, u_1,\dots,u_k\in  \frakA^+, \omega_1, \dots, \omega_k\in \Omega.$$

Next we define linear operators
$$P_\omega: \mrsha(A) \rightarrow \mrsha(A)\,\text{ for }\, \omega\in \Omega.$$
Let $\fraku=u_0\ot \twovec{\omega_1}{u_1}\tot \cdots \tot \twovec{\omega_n}{u_n}$ be a pure tensor in $\mrsha(A)$.
Since $\frakA=\basf \oplus \frakA^+$, the first tensor factor $u_0$ is either in $\basf$ or in $\frakA^+$.
We accordingly define
\begin{equation}
P_{\omega}(\fraku):=P_{\omega,\basf,A}(\fraku):=\left\{\begin{array}{ll} \kappa_{\omega}(u_0), & \text{ if } u_0\in \basf, n=0, \\
\smallskip
\kappa_{\omega}(u_0)\ot \twovec{\omega_1} {u_1} \tot \twovec{\omega_2}{u_2}\tot \cdots \tot \twovec{\omega_n} {u_n} \\
\smallskip
\quad \ \,  -1\ot\twovec{\omega_1} {\kappa_{\omega}(u_0)u_1} \tot \twovec{\omega_2} {u_2} \tot\cdots \tot \twovec{\omega_n} {u_n},
& \text{ if } u_0\in \basf, n\geq 1, \\
\smallskip
1\ot \twovec{\omega} {u_0}\tot \twovec{\omega_1} {u_1} \tot \cdots \tot \twovec{\omega_n} {u_n}, & \text{ if } u_0\in \frakA^+. \end{array}\right.
\mlabel{eq:relrbo}
\end{equation}
Then $\mrsha(A)$ is closed under the operators $P_\omega$ for $\omega\in \Omega$. Denote by $P_{\Omega, \basf, A} := (P_{\omega})_{\omega\in \Omega}$.

Let
$$i: \basf \hookrightarrow \frakA \subseteq \mrsha(A)$$
and
$$j_A: A\hookrightarrow \frakA  \hookrightarrow \mrsha(A)$$
be the nature embeddings.

\begin{theorem}
Let $(\basf,\kappa_\Omega)$ be a commutative \mrba of weight zero and
$A$ a commutative augmented algebra.
Then the triple  $(\mrsha(A), \, \rsham, \, P_{\Omega, \basf, A})$,
together with the maps $i$ and $j_A$, is the free commutative $(\basf,\kappa_\Omega)$-\mrba on $A$.
\mlabel{thm:freermrba}
\end{theorem}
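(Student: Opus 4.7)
The proof follows the template of Theorem~\ref{thm:comfree} and divides into two halves. The first is to verify that $(\mrsha(A), \rsham, P_{\Omega, \basf, A})$ is itself a commutative $(\basf, \kappa_\Omega)$-MRBA of weight zero. The commutative $\bfk$-algebra structure is inherited, since $\Shu(\bfk\Omega \ot \frakA^+)$ is a shuffle-subalgebra of $\Shu(\bfk\Omega \ot \frakA)$, which makes $\mrsha(A) \subseteq \msha(\frakA)$ a subalgebra under $\sham$. The structure map $i$ is already an MRBA homomorphism: for $k \in \basf$, the $n=0$ case of Eq.~\eqref{eq:relrbo} gives $P_\omega(i(k)) = \kappa_\omega(k) = i(\kappa_\omega(k))$. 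The main technical point is to verify the matching Rota-Baxter equation for $(P_\omega)_{\omega\in \Omega}$. For pure tensors $\fraku, \frakv \in \mrsha(A)$ I split into four subcases according to whether the leading factors $u_0, v_0$ lie in $\basf$ or in $\frakA^+$. When both lie in $\frakA^+$ the $P_\omega$'s act as in the absolute case and the computation reduces to Eq.~\eqref{eq:cm}. In the mixed and both-in-$\basf$ cases, one expands $P_\alpha(\fraku)$ via the second case of Eq.~\eqref{eq:relrbo}, applies the shuffle recursion Eq.~\eqref{eq:shrec}, and regroups terms using the Rota-Baxter identity already known for $\kappa_\omega$ on $\basf$.

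For the universal property, given an $(\basf,\kappa_\Omega)$-MRBA $(R, P_{\Omega, R})$ with structure map $i_R$ and an algebra homomorphism $f: A \to R$, I first extend $f$ to an $\basf$-algebra homomorphism $\hat{f}: \frakA = \basf \ot A \to R$ by $\hat{f}(k \ot a) = i_R(k) f(a)$. Then I define $\bar{f}: \mrsha(A) \to R$ by induction on tensor length: $\bar{f}(u_0) = \hat{f}(u_0)$ for $u_0 \in \frakA$, and
\begin{equation*}
\bar{f}\!\left(u_0 \ot \twovec{\omega_1}{u_1} \tot \bar{\fraku}'\right) := \hat{f}(u_0)\, P_{\omega_1, R}\!\left(\bar{f}(u_1 \ot \bar{\fraku}')\right).
\end{equation*}
Multiplicativity of $\bar{f}$ with respect to $\rsham$ is checked by induction on the sum of tensor lengths, paralleling the computation in the proof of Theorem~\ref{thm:comfree} and invoking the matching Rota-Baxter identity in $R$; the relation $\bar{f} \circ i = i_R$ is automatic from unitality of $f$.

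The main obstacle is verifying $\bar{f} \circ P_\omega = P_{\omega, R} \circ \bar{f}$ on a pure tensor $\fraku = k \ot \twovec{\omega_1}{u_1} \tot \bar{\fraku}'$ whose leading factor $k$ lies in $\basf$ and with $n \geq 1$, where $P_\omega$ is given by the two-term expression in the second case of Eq.~\eqref{eq:relrbo}. Applying $\bar{f}$ to that expression and invoking the already-established multiplicativity produces
\begin{equation*}
i_R(\kappa_\omega(k))\, P_{\omega_1, R}\!\left(\bar{f}(u_1 \ot \bar{\fraku}')\right) \,-\, P_{\omega_1, R}\!\left(i_R(\kappa_\omega(k))\, \bar{f}(u_1 \ot \bar{\fraku}')\right),
\end{equation*}
which by the weight-zero form of the compatibility identity Eq.~\eqref{eq:mrbm} inside $R$ collapses to $P_{\omega, R}\!\left(i_R(k)\, P_{\omega_1, R}(\bar{f}(u_1 \ot \bar{\fraku}'))\right) = P_{\omega, R}(\bar{f}(\fraku))$. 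Uniqueness then follows from the recursive identity $u_0 \ot \twovec{\omega_1}{u_1} \tot \bar{\fraku}' = u_0 \rsham P_{\omega_1}(u_1 \ot \bar{\fraku}')$ (valid because $u_1 \in \frakA^+$ places us in the third case of Eq.~\eqref{eq:relrbo}): any $(\basf,\kappa_\Omega)$-MRBA homomorphism extending $f$ is forced to coincide with $\bar{f}$ by induction on tensor length.
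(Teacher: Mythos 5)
Your proposal is correct and follows essentially the same two-step route as the paper: the same four-case verification of the matching Rota-Baxter identity via Eq.~\meqref{eq:relrbo}, the same recursive definition of $\bar{f}$ (your $\hat{f}$ merely packages the paper's two cases $u_0\in\basf$ and $u_0\in\frakA^+$ into a single formula), and the same use of Eq.~\meqref{eq:mrbm} to establish $\bar{f}\circ P_\omega=P_{\omega,R}\circ\bar{f}$ when the leading factor lies in $\basf$. One caution on ordering: at that step you should appeal only to the $\basf$-linearity of $\bar{f}$, which is immediate from the definition via $\hat{f}$, rather than to the ``already-established multiplicativity,'' because the multiplicativity induction itself relies on the operator compatibility (the paper's Eq.~\meqref{eq:cm22}), so the argument as you have literally ordered it would be circular.
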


\subsection{The proof Theorem~\mref{thm:freermrba}}
We will carry out the proof of Theorem~\mref{thm:freermrba} into two steps:
\begin{enumerate}
\item[{\bf Step 1.}]
Together with $i:\basf \to \mrsha(A)$, the triple $(\mrsha(A), \, \rsham, \, P_{\Omega, \basf, A})$ is an $(\basf,\kappa_\Omega)$-\mrba;
\item[{\bf Step 2.}]
Together with $j_A: A\to \mrsha(A)$, the $(\basf,\kappa_\Omega)$-algebra $(\mrsha(A), \, \rsham, \, P_{\Omega, \basf, A})$ satisfies the universal property of a free commutative $(\basf,\kappa_\Omega)$-\mrba on $A$.
\end{enumerate}

\subsubsection{Step 1. The triple $(\mrsha(A), \, \rsham, \, P_{\Omega, \basf, A})$ is an $(\basf,\kappa_\Omega)$-\mrba}
Since $\rsham$ is commutative associative and, by Eq.~(\mref{eq:relrbo}), the map $i:\basf\to \mrsha(A)$ is compatible with the operators $\kappa_\omega$ on $\basf$ and $P_\omega, \omega\in \Omega$ on $\mrsha(A)$, it is sufficient to verify the identity
\begin{align*}
P_\omega(\fraku) \rsham P_{\bo} (\frakv)=P_\omega\Big(\fraku\rsham P_{\bo}(\frakv)\Big)+P_{\bo}\Big(P_\omega(\fraku)\rsham \frakv\Big) \, \text{ for }\, \fraku, \frakv\in \mrsha(A), \omega, \bo\in \Omega.
\end{align*}
By additivity, we only need to consider pure tensors $\fraku\in \frakA \ot (\bfk\Omega \ot\frakA^+)^{\ot m}$ and $\frakv\in \frakA \ot (\bfk\Omega \ot\frakA^+)^{\ot n}$.
Writing
\begin{align}
\fraku=&\ u_0\ot \twovec{\omega_1}{u_1}\tot \twovec{\omega_2}{u_2} \tot\cdots \tot \twovec{\omega_m} {u_m}=: u_0 \ot \bar{\fraku} :=u_0 \ot \twovec{\omega_1}{u_1}\tot {\bar{\frak u}'},\\
 \frakv=&\ v_0\ot\twovec{\bo_1}{v_1}\tot \twovec{\bo_2}{v_2} \tot\cdots \tot \twovec{\bo_n} {v_n}=: v_0 \ot \bar{\frakv}:= v_0 \ot \twovec{\bo_1}{v_1}\tot \bar{\frak v}',
\label{eq:nots}
\end{align}
we have four cases to consider.

\noindent{\bf Case 1.} $u_0\in \basf$ and $v_0\in \basf$. In this case, by Eq.~(\mref{eq:relrbo}), we have
\begin{align*}
P_\omega(\fraku)= \kappa_\omega(u_0)\ot \bar{\fraku}-1\ot \twovec{\omega_1} {\kappa_{\omega}(u_0)u_1} \tot\bar{\fraku}'\, \text{ and }\, P_{\bo} (\frakv)= \kappa_{\bo} (v_0)\ot \bar{\frakv}-1\ot \twovec{\bo_1} {\kappa_{\bo}(v_0)v_1} \tot\bar{\frakv}'.
\end{align*}
On the one hand,
\begin{align*}
&\ P_\omega\Big(u\rsham P_{\bo}(v)\Big)\\
=&\ P_\omega\left((u_0 \ot \bar{\fraku})\rsham \left(\kappa_{\bo} (v_0)\ot \bar{\frakv}-1\ot \twovec{\bo_1} {\kappa_{\bo}(v_0)v_1} \tot\bar{\frakv}'\right)\right)\\
=&\ P_\omega\left(u_0 \kappa_{\bo} (v_0)\ot (\bar{\fraku}\shap \bar{\frakv})\right)-P_\omega\left(u_0 \ot \left(\bar{\fraku} \shap  \left(\twovec{\bo_1} {\kappa_{\bo}(v_0)v_1} \tot\bar{\frakv}'\right)\right)\right)\quad(\text{by Eq.~(\mref{eq:matdia})})\\
=&\ P_\omega\left(u_0\kappa_{\bo}(v_0)\ot \twovec{\omega_1}{u_1}
\tot (\bar{\fraku}' \shap  \bar{\frakv})
+u_0\kappa_{\bo}(v_0) \ot  \twovec{\bo_1}{v_1} \tot(\bar{\fraku}\shap \bar{\frakv}') \right)\\
&\ -P_\omega\left(u_0\ot \twovec{\omega_1}{u_1}\tot \left({\bar{\fraku}' \shap \left( \twovec{\bo_1} {\kappa_{\bo}(v_0)v_1}\tot \bar{\frakv}'\right)}\right)
+u_0\ot\twovec {\bo_1}{\kappa_{\bo}(v_0)v_1}\tot ( \bar{\fraku}\shap \bar{\frakv}')\right)\\
&\hspace{8cm}(\text{by Eq.~(\mref{eq:shrec})})\\
=&\ \kappa_{\omega}(u_0\kappa_{\bo}(v_0))\ot\twovec{\omega_1}{u_1} \tot(\bar{\fraku}'\shap \bar{\frakv})
-1\ot \twovec{\omega_1}{\kappa_{\omega}(u_0\kappa_{\bo}(v_0))u_1} \tot(\bar{\fraku}' \shap \bar{\frakv})\\
&\ +\kappa_{\omega}(u_0\kappa_{\bo}(v_0))\ot\twovec{\bo_1}{v_1} \tot(\bar{\fraku}\shap \bar{\frakv}')
-1\ot\twovec{\bo_1}{\kappa_{\omega}(u_0\kappa_{\bo}(v_0))v_1}\tot (\bar{\fraku}\shap \bar{\frakv}')\\
&\ -\kappa_{\omega}(u_0)\ot \twovec{\omega_1}{u_1}
\tot \left(\bar{\fraku}' \shap \left(\twovec{\bo_1} {\kappa_{\bo}(v_0)v_1}\tot \bar{\frakv}'\right)\right)
+1\ot\twovec{\omega_1}{\kappa_{\omega}(u_0)u_1}
\tot \left(\bar{\fraku}' \shap \left(\twovec{\bo_1} {\kappa_{\bo}(v_0)v_1}\tot \bar{\frakv}'\right)\right)\\
&\ -\kappa_{\omega}(u_0)\ot \twovec {\bo_1}{\kappa_{\bo}(v_0)v_1}\tot ( \bar{\fraku}\shap \bar{\frakv}')
+1\ot \twovec {\bo_1}{\kappa_{\omega}(u_0)\kappa_{\bo}(v_0)v_1}\tot ( \bar{\fraku}\shap \bar{\frakv}')\\
&\hspace{8cm}(\text{by Eq.~(\mref{eq:relrbo})}).
\end{align*}
By the same argument, we have
\begin{align*}
&\ P_{\bo}\Big(P_\omega(u)\rsham v\Big)\\
=&\ P_{\bo}\left(\left(\kappa_\omega(u_0)\ot \bar{\fraku}-1\ot \twovec{\omega_1} {\kappa_{\omega}(u_0)u_1} \tot\bar{\fraku}'\right)\rsham (v_0 \ot\bar{\frakv})\right)\\
=&\ \kappa_{\bo}(\kappa_\omega(u_0)v_0)\ot\twovec{\omega_1}{u_1}\tot (\bar{\fraku}'\shap \bar{\frakv})
-1\ot\twovec{\omega_1}{\kappa_{\bo}(\kappa_\omega(u_0)v_0)u_1}\tot (\bar{\fraku}'\shap \bar{\frakv})\\
&\ +\kappa_{\bo}(\kappa_\omega(u_0)v_0)\ot\twovec{\bo_1}{v_1}\tot (\bar{\fraku}\shap \bar{\frakv}')
-1\ot\twovec{\bo_1}{\kappa_{\bo}(\kappa_\omega(u_0)v_0)v_1}\tot (\bar{\fraku}\shap \bar{\frakv}')\\
&\ -\kappa_{\bo}(v_0)\ot\twovec{\omega_1}{\kappa_\omega(u_0)u_1}\tot (\bar{\fraku}'\shap \bar{\frakv})
+1\ot\twovec{\omega_1}{\kappa_{\bo}(v_0)\kappa_\omega(u_0)u_1}\tot (\bar{\fraku}'\shap \bar{\frakv})\\
&\ -\kappa_{\bo}(v_0)\ot\twovec{\bo_1}{v_1}\tot\left(\left(\twovec{\omega_1}{\kappa_\omega(u_0)u_1}\tot \bar{\fraku}'\right)\shap \bar{\frakv}'\right)
+1\ot\twovec{\bo_1}{\kappa_{\bo}(v_0)v_1}\tot\left(\left(\twovec{\omega_1}{\kappa_\omega(u_0)u_1}\tot \bar{\fraku}'\right)\shap \bar{\frakv}'\right).
\end{align*}
Thus
\begin{equation}
\begin{aligned}
&\ P_\omega\Big(u\rsham P_{\bo}(v)\Big)+P_{\bo}\Big(P_\omega(u)\rsham v\Big)\\
=&\ \kappa_{\omega}(u_0\kappa_{\bo}(v_0))\ot\twovec{\omega_1}{u_1} \tot(\bar{\fraku}'\shap \bar{\frakv})
-1\ot \twovec{\omega_1}{\kappa_{\omega}(u_0\kappa_{\bo}(v_0))u_1} \tot(\bar{\fraku}' \shap \bar{\frakv})\\
&\ +\kappa_{\omega}(u_0\kappa_{\bo}(v_0))\ot\twovec{\bo_1}{v_1} \tot(\bar{\fraku}\shap \bar{\frakv}')
-1\ot\twovec{\bo_1}{\kappa_{\omega}(u_0\kappa_{\bo}(v_0))v_1}\tot (\bar{\fraku}\shap \bar{\frakv}')\\
&\ -\kappa_{\omega}(u_0)\ot \twovec{\omega_1}{u_1}
\tot \left(\bar{\fraku}' \shap \left(\twovec{\bo_1} {\kappa_{\bo}(v_0)v_1}\tot \bar{\frakv}'\right)\right)
+1\ot\twovec{\omega_1}{\kappa_{\omega}(u_0)u_1}
\tot \left(\bar{\fraku}' \shap \left(\twovec{\bo_1} {\kappa_{\bo}(v_0)v_1}\tot \bar{\frakv}'\right)\right)\\
&\ -\kappa_{\omega}(u_0)\ot \twovec {\bo_1}{\kappa_{\bo}(v_0)v_1}\tot ( \bar{\fraku}\shap \bar{\frakv}')
+1\ot \twovec {\bo_1}{\kappa_{\omega}(u_0)\kappa_{\bo}(v_0)v_1}\tot ( \bar{\fraku}\shap \bar{\frakv}')\\
&\ +
\kappa_{\bo}(\kappa_\omega(u_0)v_0)\ot\twovec{\omega_1}{u_1}\tot (\bar{\fraku}'\shap \bar{\frakv})
-1\ot\twovec{\omega_1}{\kappa_{\bo}(\kappa_\omega(u_0)v_0)u_1}\tot (\bar{\fraku}'\shap \bar{\frakv})\\
&\ +\kappa_{\bo}(\kappa_\omega(u_0)v_0)\ot\twovec{\bo_1}{v_1}\tot (\bar{\fraku}\shap \bar{\frakv}')
-1\ot\twovec{\bo_1}{\kappa_{\bo}(\kappa_\omega(u_0)v_0)v_1}\tot (\bar{\fraku}\shap \bar{\frakv}')\\
&\ -\kappa_{\bo}(v_0)\ot\twovec{\omega_1}{\kappa_\omega(u_0)u_1}\tot (\bar{\fraku}'\shap \bar{\frakv})
+1\ot\twovec{\omega_1}{\kappa_{\bo}(v_0)\kappa_\omega(u_0)u_1}\tot (\bar{\fraku}'\shap \bar{\frakv})\\
&\ -\kappa_{\bo}(v_0)\ot\twovec{\bo_1}{v_1}\tot\left(\left(\twovec{\omega_1}{\kappa_\omega(u_0)u_1}\tot \bar{\fraku}'\right)\shap \bar{\frakv}'\right)
+1\ot\twovec{\bo_1}{\kappa_{\bo}(v_0)v_1}\tot\left(\left(\twovec{\omega_1}{\kappa_\omega(u_0)u_1}\tot \bar{\fraku}'\right)\shap \bar{\frakv}'\right).
\end{aligned}\label{eq:rhs}
\end{equation}
Since $(\basf,\kappa_\Omega)$ is a commutative \mrba of weight zero, we have
\begin{align}
\kappa_{\bo}(v_0)\kappa_\omega(u_0)=\kappa_\omega(u_0)\kappa_{\bo}(v_0)
=\kappa_\omega(u_0\kappa_{\bo}(v_0))+\kappa_{\bo}(\kappa_\omega(u_0)v_0). \mlabel{eq:mrbaa}
\end{align}
This allows us to rewrite Eq.~(\mref{eq:rhs}) as
\begin{align*}
&\ P_\omega\Big(u\rsham P_{\bo}(v)\Big)+P_{\bo}\Big(P_\omega(u)\rsham v\Big)\\
=&\ \kappa_{\omega}(u_0\kappa_{\bo}(v_0))\ot\twovec{\omega_1}{u_1} \tot(\bar{\fraku}'\shap \bar{\frakv})
 +\kappa_{\omega}(u_0\kappa_{\bo}(v_0))\ot\twovec{\bo_1}{v_1} \tot (\bar{\fraku}\shap \bar{\frakv}')\\
&\ -\kappa_{\omega}(u_0)\ot \twovec{\omega_1}{u_1}
\tot \left(\bar{\fraku}' \shap \left(\twovec{\bo_1} {\kappa_{\bo}(v_0)v_1}\tot \bar{\frakv}'\right)\right)
+1\ot\twovec{\omega_1}{\kappa_{\omega}(u_0)u_1}
\tot \left(\bar{\fraku}' \shap \left(\twovec{\bo_1} {\kappa_{\bo}(v_0)v_1}\tot \bar{\frakv}'\right)\right)\\
&\ -\kappa_{\omega}(u_0)\ot \twovec {\bo_1}{\kappa_{\bo}(v_0)v_1}\tot ( \bar{\fraku}\shap \bar{\frakv}')
 +
\kappa_{\bo}(\kappa_\omega(u_0)v_0)\ot\twovec{\omega_1}{u_1}\tot (\bar{\fraku}'\shap \bar{\frakv})\\
&\ +\kappa_{\bo}(\kappa_\omega(u_0)v_0)\ot\twovec{\bo_1}{v_1}\tot (\bar{\fraku}\shap \bar{\frakv}')
 -\kappa_{\bo}(v_0)\ot\twovec{\omega_1}{\kappa_\omega(u_0)u_1}\tot (\bar{\fraku}'\shap \bar{\frakv})\\
&\ -\kappa_{\bo}(v_0)\ot\twovec{\bo_1}{v_1}\tot\left(\left(\twovec{\omega_1}{\kappa_\omega(u_0)u_1}\tot \bar{\fraku}'\right)\shap \bar{\frakv}'\right)
+1\ot\twovec{\bo_1}{\kappa_{\bo}(v_0)v_1}\tot\left(\left(\twovec{\omega_1}{\kappa_\omega(u_0)u_1}\tot \bar{\fraku}'\right)\shap \bar{\frakv}'\right)
\end{align*}
by gathering the second, tenth, fourteenth terms, and the fourth, eighth, twelfth terms of Eq.~(\mref{eq:rhs}).

On the other hand,
\begin{align*}
&\ P_\omega(u) \rsham P_{\bo} (v)\\
=&\ \left(\kappa_\omega(u_0)\ot \bar{\fraku}-1\ot \twovec{\omega_1} {\kappa_{\omega}(u_0)u_1} \tot \bar{\fraku}'\right)\rsham
\left(\kappa_{\bo} (v_0)\ot \bar{\frakv}-1\ot \twovec{\bo_1} {\kappa_{\bo}(v_0)v_1} \tot \bar{\frakv}'\right)\\
=&\ \kappa_\omega(u_0)\kappa_{\bo}(v_0)\ot(\bar{\fraku} \shap  \bar{\frakv}) -\kappa_{\omega}(u_0)\ot \left({\bar{\fraku} \shap \left( \twovec{\bo_1} {\kappa_{\bo}(v_0)v_1}\tot \bar{\frakv}'\right)}\right)\\
&\ -\kappa_{\bo}(v_0)\ot \left(\left(\twovec{\omega_1}{\kappa_\omega(u_0)u_1}\tot \bar{\fraku}'\right)\shap \bar{\frakv}\right)
+1\ot \left(\left(\twovec{\omega_1}{\kappa_\omega(u_0)u_1}\tot \bar{\fraku}'\right)\shap
\left(\twovec{\bo_1} {\kappa_{\bo}(v_0)v_1} \tot \bar{\frakv}'\right)\right)\\
=&\ \kappa_\omega(u_0)\kappa_{\bo}(v_0)\ot \twovec{\omega_1}{u_1}
\tot (\bar{\fraku}' \shap  \bar{\frakv})
+\kappa_\omega(u_0)\kappa_{\bo}(v_0) \ot  \twovec{\bo_1}{v_1} \tot (\bar{\fraku}\shap \bar{\frakv}')\\
&\ -\kappa_{\omega}(u_0)\ot \twovec{\omega_1}{u_1}\tot \left({\bar{\fraku}' \shap \left( \twovec{\bo_1} {\kappa_{\bo}(v_0)v_1}\tot \bar{\frakv}'\right)}\right)
-\kappa_{\omega}(u_0)\ot \twovec{\bo_1}{\kappa_{\bo}(v_0)v_1}\tot(\bar{\fraku} \shap  \bar{\frakv}') \\
&\ -\kappa_{\bo}(v_0)\ot\twovec{\omega_1}{\kappa_\omega(u_0)u_1}\tot (\bar{\fraku}'\shap \bar{\frakv})
-\kappa_{\bo}(v_0)\ot\twovec{\bo_1}{v_1}\tot\left(\left(\twovec{\omega_1}{\kappa_\omega(u_0)u_1}\tot \bar{\fraku}'\right)\shap \bar{\frakv}'\right)\\
&\ +1\ot\twovec{\omega_1}{\kappa_{\omega}(u_0)u_1}
\tot \left(\bar{\fraku}' \shap \left(\twovec{\bo_1} {\kappa_{\bo}(v_0)v_1}\tot \bar{\frakv}'\right)\right)
+1\ot\twovec{\bo_1}{\kappa_{\bo}(v_0)v_1}\tot\left(\left(\twovec{\omega_1}{\kappa_\omega(u_0)u_1}\tot \bar{\fraku}'\right)\shap \bar{\frakv}'\right)\\
&\hspace{8cm}(\text{by Eq.~(\mref{eq:shrec})})\\
=&\ \kappa_\omega(u_0)\kappa_{\bo}(v_0)\ot \twovec{\omega_1}{u_1}
\tot (\bar{\fraku}' \shap  \bar{\frakv})
+\kappa_\omega(u_0)\kappa_{\bo}(v_0) \ot  \twovec{\bo_1}{v_1} \tot (\bar{\fraku}\shap \bar{\frakv}')\\
&\ -\kappa_{\omega}(u_0)\ot \twovec{\omega_1}{u_1}\tot \left({\bar{\fraku}' \shap \left( \twovec{\bo_1} {\kappa_{\bo}(v_0)v_1}\tot \bar{\frakv}'\right)}\right)
-\kappa_{\omega}(u_0)\ot \twovec{\bo_1}{\kappa_{\bo}(v_0)v_1}\tot(\bar{\fraku} \shap  \bar{\frakv}') \\
&\ -\kappa_{\bo}(v_0)\ot\twovec{\omega_1}{\kappa_\omega(u_0)u_1}\tot (\bar{\fraku}'\shap \bar{\frakv})
-\kappa_{\bo}(v_0)\ot\twovec{\bo_1}{v_1}\tot\left(\left(\twovec{\omega_1}{\kappa_\omega(u_0)u_1}\tot \bar{\fraku}'\right)\shap \bar{\frakv}'\right)\\
&\ +1\ot\twovec{\omega_1}{\kappa_{\omega}(u_0)u_1}
\tot \left(\bar{\fraku}' \shap \left(\twovec{\bo_1} {\kappa_{\bo}(v_0)v_1}\tot \bar{\frakv}'\right)\right)
+1\ot\twovec{\bo_1}{\kappa_{\bo}(v_0)v_1}\tot\left(\left(\twovec{\omega_1}{\kappa_\omega(u_0)u_1}\tot \bar{\fraku}'\right)\shap \bar{\frakv}'\right)\\
=&\ \kappa_\omega(u_0\kappa_{\bo}(v_0)) \ot \twovec{\omega_1}{u_1}
\tot (\bar{\fraku}' \shap  \bar{\frakv})
+\kappa_\omega(u_0\kappa_{\bo}(v_0)) \ot \twovec{\omega_1}{u_1}
\tot (\bar{\fraku}' \shap  \bar{\frakv})\\
&\ +\kappa_\omega(u_0\kappa_{\bo}(v_0)) \ot  \twovec{\bo_1}{v_1} \tot (\bar{\fraku}\shap \bar{\frakv}')
+\kappa_\omega(u_0\kappa_{\bo}(v_0)) \ot  \twovec{\bo_1}{v_1} \tot (\bar{\fraku}\shap \bar{\frakv}')\\
&\ -\kappa_{\omega}(u_0)\ot \twovec{\omega_1}{u_1}\tot \left({\bar{\fraku}' \shap \left( \twovec{\bo_1} {\kappa_{\bo}(v_0)v_1}\tot \bar{\frakv}'\right)}\right)
-\kappa_{\omega}(u_0)\ot \twovec{\bo_1}{\kappa_{\bo}(v_0)v_1}\tot(\bar{\fraku} \shap  \bar{\frakv}') \\
&\ -\kappa_{\bo}(v_0)\ot\twovec{\omega_1}{\kappa_\omega(u_0)u_1}\tot (\bar{\fraku}'\shap \bar{\frakv})
-\kappa_{\bo}(v_0)\ot\twovec{\bo_1}{v_1}\tot\left(\left(\twovec{\omega_1}{\kappa_\omega(u_0)u_1}\tot \bar{\fraku}'\right)\shap \bar{\frakv}'\right)\\
&\ +1\ot\twovec{\omega_1}{\kappa_{\omega}(u_0)u_1}
\tot \left(\bar{\fraku}' \shap \left(\twovec{\bo_1} {\kappa_{\bo}(v_0)v_1}\tot \bar{\frakv}'\right)\right)
+1\ot\twovec{\bo_1}{\kappa_{\bo}(v_0)v_1}\tot\left(\left(\twovec{\omega_1}{\kappa_\omega(u_0)u_1}\tot \bar{\fraku}'\right)\shap \bar{\frakv}'\right)\\
&\hspace{8cm}(\text{by Eq.~(\mref{eq:mrbaa})}).
\end{align*}
Then the $i$-th term in the expansion of $P_\omega(u) \rsham P_{\bo} (v)$ is equal to the $\sigma(i)$-th term in the
 expansion of $P_\omega\Big(u\rsham P_{\bo}(v)\Big)+P_{\bo}\Big(P_\omega(u)\rsham v\Big)$, where $\sigma$ is the permutation of order $10$:
 \begin{equation*}
\begin{pmatrix}
     i \\
     \sigma(i)
\end{pmatrix}
=
\left(
\begin{array}{ccccccccccc}
1 & 2 & 3 & 4 & 5 & 6 & 7 & 8 & 9 & 10\\
1 & 6 & 2 & 7 & 3 & 5 & 8 & 9 & 4 & 10\emph{}
\end{array}
\right).
\end{equation*}
\noindent{\bf Case 2.} $u_0\in \basf$ and $v_0\in \frakA^+$. In this case, by Eq.~(\mref{eq:relrbo}), we have
\begin{align*}
P_\omega(u)= \kappa_\omega(u_0)\ot \bar{\fraku}-1\ot \twovec{\omega_1} {\kappa_{\omega}(u_0)u_1} \tot \bar{\fraku}' \, \text{ and }\, P_{\bo} (v)= 1\ot \twovec{\bo}{v_0}\tot\bar{\frakv}.
\end{align*}
Then on the one hand,
\begin{align*}
&\ P_\omega(u) \rsham P_{\bo} (v)\\
=&\ \left(\kappa_\omega(u_0)\ot \bar{\fraku}-1\ot \twovec{\omega_1} {\kappa_{\omega}(u_0)u_1} \tot \bar{\fraku}'\right) \rsham \left(1\ot \twovec{\bo}{v_0}\tot \bar{\frakv}\right)\\
=&\ \kappa_\omega(u_0)\ot \left(\bar{\fraku}\shap \left(\twovec{\bo}{v_0}\tot \bar{\frakv}\right)\right)
-1\ot \left(\left(\twovec{\omega_1} {\kappa_{\omega}(u_0)u_1} \tot \bar{\fraku}'\right) \shap \left( \twovec{\bo}{v_0}\tot \bar{\frakv}\right)\right)\\
=&\ \kappa_\omega(u_0)\ot \twovec{\omega_1}{u_1}\tot \left(\bar{\fraku}'\shap \left(\twovec{\bo}{v_0}\tot \bar{\frakv}\right)\right)
+\kappa_\omega(u_0)\ot \twovec{\bo}{v_0}\tot \left(\bar{\fraku}\shap  \bar{\frakv}\right)\\
&\ -1\ot \twovec{\omega_1} {\kappa_{\omega}(u_0)u_1} \tot \left(\bar{\fraku}' \shap \left( \twovec{\bo}{v_0}\tot \bar{\frakv}\right)\right)
-1\ot \twovec{\bo}{v_0}\tot \left(\left(\twovec{\omega_1} {\kappa_{\omega}(u_0)u_1} \tot \bar{\fraku}'\right) \shap  \bar{\frakv}\right)\\
&\hspace{8cm} (\text{by Eq.~(\mref{eq:shrec})})\\
=&\ \kappa_\omega(u_0)\ot \twovec{\omega_1}{u_1}\tot \left(\bar{\fraku}'\shap \left(\twovec{\bo}{v_0}\tot \bar{\frakv}\right)\right)
+\kappa_\omega(u_0)\ot \twovec{\bo}{v_0}\tot \left(\bar{\fraku}\shap  \bar{\frakv}\right)\\
&\ -1\ot \twovec{\omega_1} {\kappa_{\omega}(u_0)u_1} \tot \left(\bar{\fraku}' \shap \left( \twovec{\bo}{v_0}\tot \bar{\frakv}\right)\right)
-1\ot \twovec{\bo}{v_0}\tot \twovec{\omega_1} {\kappa_{\omega}(u_0)u_1} \tot (\bar{\fraku}' \shap  \bar{\frakv})\\
&\ -1\ot\twovec{\bo}{v_0}\tot \twovec{\bo_1}{v_1}\tot \left(\left(\twovec{\omega_1} {\kappa_{\omega}(u_0)u_1} \tot \bar{\fraku}'\right) \shap  \bar{\frakv}'\right)\quad (\text{by Eq.~(\mref{eq:shrec})}).
\end{align*}
On the other hand,
\begin{align*}
&\ P_\omega\Big(u\rsham P_{\bo}(v)\Big)+P_{\bo}\Big(P_\omega(u)\rsham v\Big)\\
=&\ P_\omega \left((u_0 \ot \bar{\fraku})\rsham \left(1\ot \twovec{\bo}{v_0}\tot\bar{\frakv}\right)\right)
+P_{\bo} \left(\left(\kappa_\omega(u_0)\ot \bar{\fraku}-1\ot \twovec{\omega_1} {\kappa_{\omega}(u_0)u_1} \tot \bar{\fraku}'\right)\rsham (v_0\ot \bar{\frakv})\right)\\
=&\ P_\omega \left(u_0 \ot \left( \bar{\fraku}\shap \left(\twovec{\bo}{v_0}\tot\bar{\frakv}\right)\right)\right)
+P_{\bo} \left(\kappa_\omega(u_0)v_0\ot (\bar{\fraku} \shap  \bar{\frakv})-v_0 \ot \left(\twovec{\omega_1} {\kappa_{\omega}(u_0)u_1} \tot \bar{\fraku}'\right)\shap \bar{\frakv}\right)\\
&\hspace{9cm} (\text{by Eq.~(\mref{eq:matdia})})\\
=&\ P_\omega\left(u_0\ot \twovec{\omega_1}{u_1}\tot \left(\bar{\fraku}' \shap \left(\twovec{\bo}{v_0} \tot \bar{\frakv} \right)\right)+u_0\ot \twovec{\bo}{v_0}\tot (\bar{\fraku}\shap \bar{\frakv}) \right)\\
&\ +P_{\bo}\left(\kappa_{\omega}(u_0)v_0\ot\twovec{\omega_1}{u_1}\tot (\bar{\fraku}'\shap \bar{\frakv})
+\kappa_{\omega}(u_0)v_0\ot\twovec{\bo_1}{v_1} \tot (\bar{\fraku}\shap \bar{\frakv}')\right)\\
&\ -P_{\bo}\left(v_0 \ot\twovec{\omega_1}{\kappa_{\omega}(u_0)u_1} \tot(\bar{\fraku}'\shap \bar{\frakv})
+v_0\ot\twovec{\bo_1}{v_1} \tot \left( \left(\twovec{\omega_1}{\kappa_{\omega}(u_0)u_1} \tot \bar{\fraku}'\right)\shap  \bar{\frakv}'\right)\right)\quad (\text{by Eq.~(\mref{eq:matdia})})\\
=&\ \kappa_\omega (u_0)\ot \twovec{\omega_1}{u_1}\tot \left(\bar{\fraku}' \shap \left(\twovec{\bo}{v_0} \tot \bar{\frakv} \right)\right)
-1\ot \twovec{\omega_1}{\kappa_\omega (u_0)u_1}\tot \left(\bar{\fraku}' \shap \left(\twovec{\bo}{v_0} \tot \bar{\frakv} \right)\right)\\
&\ +\kappa_\omega (u_0)\ot  \twovec{\bo}{v_0}\tot (\bar{\fraku}\shap \bar{\frakv})
-1\ot  \twovec{\bo}{\kappa_\omega (u_0)v_0}\tot (\bar{\fraku}\shap \bar{\frakv})\\
&\ +1\ot \twovec{\bo}{\kappa_{\omega}(u_0)v_0}\tot\twovec{\omega_1}{u_1}\tot (\bar{\fraku}'\shap \bar{\frakv})
+1\ot \twovec{\bo}{\kappa_{\omega}(u_0)v_0}\tot\twovec{\bo_1}{v_1} \tot (\bar{\fraku}\shap \bar{\frakv}')\\
&\ -1\ot \twovec{\bo}{v_0} \tot \twovec{\omega_1}{\kappa_{\omega}(u_0)u_1} \tot(\bar{\fraku}'\shap \bar{\frakv})
-1\ot \twovec{\bo}{v_0}\tot \twovec{\bo_1}{v_1} \tot \left( \left(\twovec{\omega_1}{\kappa_{\omega}(u_0)u_1} \tot \bar{\fraku}'\right)\shap  \bar{\frakv}'\right)\\
&\hspace{9cm}(\text{by Eq.~(\mref{eq:relrbo})})\\
=&\ \kappa_\omega (u_0)\ot \twovec{\omega_1}{u_1}\tot \left(\bar{\fraku}' \shap \left(\twovec{\bo}{v_0} \tot \bar{\frakv} \right)\right)
-1\ot \twovec{\omega_1}{\kappa_\omega (u_0)u_1}\tot \left(\bar{\fraku}' \shap \left(\twovec{\bo}{v_0} \tot \bar{\frakv} \right)\right)\\
&\ +\kappa_\omega (u_0)\ot  \twovec{\bo}{v_0}\tot (\bar{\fraku}\shap \bar{\frakv})
-1\ot \twovec{\bo}{v_0} \tot \twovec{\omega_1}{\kappa_{\omega}(u_0)u_1} \tot(\bar{\fraku}'\shap \bar{\frakv})\\
&\ -1\ot \twovec{\bo}{v_0}\tot \twovec{\bo_1}{v_1} \tot \left( \left(\twovec{\omega_1}{\kappa_{\omega}(u_0)u_1} \tot \bar{\fraku}'\right)\shap  \bar{\frakv}'\right),
\end{align*}
where the last equality is by gathering the fourth, fifth, sixth terms by Eq.~(\mref{eq:shrec}).

Thus we obtain
\begin{align*}
P_\omega(\fraku) \rsham P_{\bo} (\frakv)=P_\omega\Big(\fraku\rsham P_{\bo}(\frakv)\Big)+P_{\bo}\Big(P_\omega(\fraku)\rsham \frakv\Big).
\end{align*}

\noindent{\bf Case 3.} $u_0\in \frakA^+$ and $v_0\in \basf$. The proof is similar to the one for Case 2.
\smallskip

\noindent{\bf Case 4.} $u_0\in \frakA^+$ and $v_0\in \frakA^+$. In this case, we can write
\begin{align*}
P_{\omega}(\fraku)=1\ot \twovec{\omega}{u_0}\tot\bar{\fraku}\, \text{ and }\, P_{\bo} (\frakv)= 1\ot \twovec{\bo}{v_0}\tot\bar{\frakv}.
\end{align*}
Then the proof in this case is similar to the proof of Eq.~(\mref{eq:cm}) in Theorem~\mref{thm:comfree}.

This completes the proof of Step 1.

\subsubsection{Step 2. The universal property of $(\mrsha(A), \, \rsham, \, P_{\Omega, \basf, A})$}

Let $(R, P_{\Omega, R})$
be a commutative $(\basf,\kappa_\Omega)$-\mrba of weight zero and let $f: A \rightarrow R$ be an algebra homomorphism. We show that there is unique $(\basf,\kappa_\Omega)$-\mrba homomorphism $\free{f}:(\mrsha(A),P_\Omega)\to (R,P_{\Omega,R})$ such that $\free{f} j_R=f$.

({\bf The existence}).
For any pure tensor $\fraku=u_0\ot \bar{u}
 \in \frakA\ot (\bfk \Omega \ot \frakA^+)^{\ot m}$ with $m\geq 0$, we use induction on $m\geq 0$ to define $\free{f}(u)$.
For the initial step of $m=0$, we have $\fraku = u_0 \ot 1_\bfk=u_0$.
If $u_0$ is in $\basf$, then define
\begin{align}
\bar{f}(u):= u. \mlabel{eq:ini}
\end{align}
If $u_0$ is in $\frakA^+$, then we may write $u_0=u_{0,a} \ot u_{0, v}=u_{0,a} u_{0, v}$ for some $u_{0, a}\in \basf$ and $u_{0,v}\in A^+$,
and we define
\begin{align}
\bar{f}(u_0) :=\bar{f}(u_{0,a} u_{0, v}):=u_{0,a} f(u_{0, v}). \mlabel{eq:ini2}
\end{align}

Consider the induction step of $m\geq 1$. Write $\fraku=u_0\ot \twovec{\omega_1}{u_1}\tot \twovec{\omega_2}{u_2} \tot\cdots \tot \twovec{\omega_m} {u_m}= u_0\ot \twovec{\omega_1}{u_1}\tot \bar{\fraku}'.$
If $u_0\in \basf$, then define
\begin{equation}
\bar{f}(\fraku) :=\bar{f}\left(u_0\ot \twovec{\omega_1}{u_1}\tot \bar{\fraku}'\right) =\bar{f}(u_0P_{\omega_1}(u_1\ot \bar{\fraku}'))
:= u_0\bar{f}(P_{\omega_1}(u_1\ot \bar{\fraku}'))  :=u_0P_{\omega_1, R}\bar{f}(u_1\ot \bar{\fraku}').
\mlabel{eq:dfu}
\end{equation}
If $u_0\in \frakA^+$, then define
\begin{align}
\bar{f}\left(u_0\ot \twovec{\omega_1}{u_1}\tot \bar{\fraku}'\right) :=\bar{f}({u_0} P_{\omega_1}(u_1\ot \bar{\fraku}')):=\bar{f}(u_0)P_{\omega_1, R}\bar{f}(u_1\ot \bar{\fraku}').
\mlabel{eq:dfuu2}
\end{align}

By Eqs.~(\mref{eq:ini}) and~(\mref{eq:dfu}), $\bar{f}$ is $\basf$-linear. Now we prove that $\bar{f}$ is compatible with the operators $P_\omega$ and $P_{\omega,R}, \omega \in \Omega$.

Let $\fraku=u_0\ot \twovec{\omega_1}{u_1}\tot \bar{\fraku}' \in \frakA\ot (\bfk \Omega \ot \frakA^+)^{\ot m}$ with $m\geq 0$ and $P_\omega\in P_\Omega$.
If $u_0\in \basf$, then
\begin{align*}
\bar{f} P_{\omega}(\fraku)
=&\ \bar{f}P_{\omega}\left(u_0\ot \twovec{\omega_1}{u_1}\tot \bar{\fraku}'\right)\\
=&\ \bar{f}\,\left(\kappa_\omega(u_0)\ot \twovec{\omega_1}{u_1}\tot \bar{\fraku}'-1\ot \twovec{\omega_1} {\kappa_{\omega}(u_0)u_1} \tot \bar{\fraku}'\right)
\quad (\text{by Eq.~(\mref{eq:relrbo})})\\
=&\ \kappa_\omega(u_0) P_{\omega_1, R}(\bar{f}(u_1\ot \bar{\fraku}'))- P_{\omega_1, R}\Big(\bar{f}(\kappa_\omega(u_0)u_1\ot \bar{\fraku}')\Big)\quad (\text{by Eq.~(\mref{eq:dfu})})\\
=&\ \kappa_\omega(u_0) P_{\omega_1, R}(\bar{f}(u_1\ot \bar{\fraku}'))-P_{\omega_1, R}\Big(\kappa_\omega(u_0)\bar{f}(u_1\ot \bar{\fraku}')\Big)\quad (\text{by $\basf$-linearity})\\
=&\ P_{\omega, R}\Big(u_0 P_{\omega_1, R}(\bar{f}(u_1\ot \bar{\fraku}')) \Big)\quad (\text{by Eq.~\meqref{eq:mrbm}})\\
=&\ P_{\omega, R} \bar{f}\left(u_0\ot  \twovec{\omega_1}{u_1} \tot \bar{\fraku}'\right)\quad (\text{by Eq.~(\mref{eq:dfu})})\\
=&\ P_{\omega, R} \bar{f}(\fraku).
\end{align*}
If $u_0\in \frakA^+$, then
\begin{align*}
\bar{f} P_{\omega}(\fraku)=&\ \bar{f}\left(1\ot \twovec{\omega}{u_0} \tot \twovec{\omega_1}{u_1}\tot \bar{\fraku}'\right)=\bar{f}(1)P_{\omega, R}\bar{f} (\fraku)=P_{\omega, R}\bar{f} (\fraku).
\end{align*}
Thus
\begin{equation}
\bar{f}  P_{\omega}=P_{\omega, R}  \bar{f}\, \text{ for }\, \omega\in \Omega.
\mlabel{eq:cm22}
\end{equation}

Next we check the compatibility of $\bar{f}$ with the multiplication $\rsham$:
for $\fraku=u_0\ot \twovec{\omega_1}{u_1}\tot \bar{\fraku}' \in \frakA\ot (\bfk\Omega \ot\frakA^+)^{\ot m}$ and $\frakv=v_0 \ot \twovec{\bo_1}{v_1}\tot \bar{\frak v}' \in \frakA\ot (\bfk\Omega \ot\frakA^+)^{\ot n}$,
\begin{equation}
 \bar{f}({\fraku}\rsham{\frakv})=\bar{f}({\fraku})\bar{f}(\frakv).
\mlabel{eq:comf}
\end{equation}
We will check this utilizing the induction on $m+n\geq 0$. When $m=n=0$,  we have
$$\fraku= u_0\,\text{ and }\,  \frakv=v_0.$$
If $u_0\in \basf$ and $v_0\in \basf$, then it follows from Eq.~(\mref{eq:ini}) that
$$\bar{f}(\fraku\rsham \frakv)=\bar{f}( u_0\rsham  v_0)=\bar{f}( u_0v_0)= u_0v_0=\bar{f}( u_0)\bar{f}( v_0)=\bar{f}(\fraku)\bar{f}(\frakv).$$
If $u_0\in \basf$ and $v_0\in \frakA^+$ or $u_0\in \frakA^+$ and $v_0\in \basf$, without loss of generality,
letting $u_0\in \basf$ and $v_0\in \frakA^+$, then we may write $v_0=v_{0,a} \ot v_{0, v}=v_{0,a}  v_{0, v}$ for some $v_{0, a}\in \basf$ and $v_{0,v}\in A^+$,
and we have
$$\bar{f}(\fraku\rsham \frakv)=\bar{f}(u_0\rsham v_0)=\bar{f}(u_0 v_{0, a} v_{0,v})= u_0 \bar{f}(v_{0, a}v_{0,v})
=\bar{f}(u_0) \bar{f}(v_0) = \bar{f}(\fraku) \bar{f}(\frakv) .$$
If $u_0, v_0\in \frakA^+$, then
\begin{equation}
u_0=u_{0,a} \ot u_{0, v}=u_{0,a} u_{0, v}\,\text{ and }\, v_0=v_{0,a} \ot v_{0, v}=v_{0,a} v_{0, v}
\mlabel{eq:u0v04}
\end{equation}
for some $u_{0, a}, v_{0, a}\in \basf$ and $u_{0,v}, v_{0,v}\in A^+$. We get
\begin{align*}
\bar{f}(\fraku\rsham \frakv)&=\bar{f}(u_0 v_0)=\bar{f}(u_{0,a}v_{0,a}(u_{0,v}v_{0,v}))\\
&=u_{0,a}v_{0,a}f(u_{0,v}v_{0,v})=u_{0,a}f(u_{0,v})v_{0,a}f(v_{0,v})\\
&=\bar{f}( u_0)\bar{f}( v_0)=\bar{f}(\fraku)\bar{f}(\frakv).
\end{align*}

Suppose that Eq.~(\mref{eq:comf}) has been verified for $m+n \leq k$ with $k\geq 0$, and consider the case of $m+n = k+1\geq 1$.
There are four cases to consider.

\noindent{\bf Case 1.} $u_0\in \basf$ and $v_0\in \basf$. In this case, we have
\begin{align*}
&\ \bar{f}(\fraku\rsham \frakv)\\
=&\ \bar{f}\left(\left(u_0\ot \twovec{\omega_1}{u_1}\tot \bar{\frak u}'\right) \rsham \left(v_0 \ot \twovec{\bo_1}{v_1}\tot \bar{\frak v}'\right)\right)\\
=&\ \bar{f}\left(u_0 v_0 \ot \left(\left( \twovec{\omega_1}{u_1}\tot \bar{\frak u}'\right) \shap \left( \twovec{\bo_1}{v_1}\tot \bar{\frak v}'\right)\right)\right)\quad (\text{by Eq.~(\mref{eq:matdia})})\\
=&\ \bar{f}\left(u_0 v_0 \ot \twovec{\omega_1}{u_1} \tot \left(\bar{\frak u}'\shap \left( \twovec{\bo_1}{v_1}\tot \bar{\frak v}'\right) \right)\right)+ \bar{f}\left(u_0 v_0 \ot \twovec{\bo_1}{v_1}\tot \left(\left( \twovec{\omega_1}{u_1}\tot \bar{\frak u}'\right)\shap \bar{\frak v}'\right)\right)\quad (\text{by Eq.~(\mref{eq:shrec})})\\
=&\ u_0v_0 P_{\omega_1, R}\bar{f}\left(u_1\ot \left(\bar{\fraku}'\shap \left( \twovec{\bo_1}{v_1}\tot \bar{\frak v}'\right)\right)\right)
+u_0v_0 P_{\bo_1, R}\bar{f}\left( v_1\ot \left( \left( \twovec{\omega_1}{u_1}\tot \bar{\frak u}'\right) \shap \bar{\frakv}'\right)\right)\quad (\text{by Eq.~(\mref{eq:dfu})})\\
=&\ u_0v_0 P_{\omega_1, R}\bar{f}\left((u_1\ot \bar{\fraku}')\rsham  \left( 1\ot \twovec{\bo_1}{v_1}\tot \bar{\frak v}'\right)\right)
+u_0v_0 P_{\bo_1, R}\bar{f}\left(  \left( 1\ot \twovec{\omega_1}{u_1}\tot \bar{\frak u}'\right)\rsham (v_1\ot  \bar{\frakv}')\right)\\
&\hspace{8cm} (\text{by Eq.~(\mref{eq:matdia})})\\
=&\ u_0v_0 P_{\omega_1, R}\bar{f}(u_1\ot \bar{\fraku}')\bar{f} \left( 1\ot \twovec{\bo_1}{v_1}\tot \bar{\frak v}'\right)
+u_0v_0 P_{\bo_1, R}\bar{f}  \left( 1\ot \twovec{\omega_1}{u_1}\tot \bar{\frak u}'\right)\bar{f} (v_1\ot  \bar{\frakv}')\\
&\hspace{8cm}(\text{by the induction hypothesis})\\
=&\ u_0v_0 P_{\omega_1, R}\bar{f}(u_1\ot \bar{\fraku}')\bar{f} ( P_{\bo_1}(v_1\ot \bar{\frak v}'))
+u_0v_0 P_{\bo_1, R}\bar{f}(P_{\omega_1}({u_1}\ot \bar{\frak u}'))\bar{f} (v_1\ot  \bar{\frakv}')\quad (\text{by Eq.~(\mref{eq:relrbo})})\\
=&\ u_0v_0 P_{\omega_1, R}\Big(\bar{f}(u_1\ot \bar{\fraku}')P_{\bo_1, R}(\bar{f}(v_1\ot \bar{\frak v}'))\Big)
+u_0v_0P_{\bo_1, R}\Big(P_{\omega_1, R}(\bar{f}(u_1\ot \bar{\fraku}'))\bar{f}(v_1\ot \bar{\frak v}')\Big)\quad (\text{by Eq.~(\mref{eq:cm22})})\\
=&\ u_0v_0 P_{\omega_1, R}(\bar{f}(u_1\ot \bar{\fraku}'))P_{\bo_1, R}(\bar{f}(v_1\ot \bar{\frak v}'))\quad (\text{by $(R, P_{\Omega, R})$ being a \match Rota-Baxter algebra})\\
=&\ \Big(u_0 P_{\omega_1, R}(\bar{f}(u_1\ot \bar{\fraku}'))\Big)\Big(v_0P_{\bo_1, R}(\bar{f}(v_1\ot \bar{\frak v}'))\Big)\quad (\text{by the commutativity})\\
=&\ \bar{f}\left(u_0\ot \twovec{\omega_1}{u_1}\tot \bar{\fraku}'\right)\bar{f}\left(v_0\ot \twovec{\bo_1}{ v_1}\tot \bar{\frak v}'\right)\quad (\text{by Eq.~(\mref{eq:dfu})})\\
=&\ \bar{f}({\fraku})\bar{f}(\frakv).
\end{align*}

\noindent{\bf Case 2.} $u_0\in \basf$ and $v_0\in \frakA^+$. In this case, we have
\begin{align*}
&\ \bar{f}(\fraku\rsham \frakv)\\
=&\ \bar{f}\left(\left(u_0\ot \twovec{\omega_1}{u_1}\tot \bar{\frak u}'\right) \rsham \left(v_0 \ot \twovec{\bo_1}{v_1}\tot \bar{\frak v}'\right)\right)\\
=&\ \bar{f}\left(u_0 v_0 \ot \twovec{\omega_1}{u_1} \tot \left(\bar{\frak u}'\shap \left( \twovec{\bo_1}{v_1}\tot \bar{\frak v}'\right) \right)\right)+ \bar{f}\left(u_0 v_0 \ot \twovec{\bo_1}{v_1}\tot \left(\left( \twovec{\omega_1}{u_1}\tot \bar{\frak u}'\right)\shap \bar{\frak v}'\right)\right)\quad (\text{by Eq.~(\mref{eq:shrec})})\\
=&\ \bar{f}(u_0v_0) P_{\omega_1, R}\bar{f}\left(u_1\ot \left(\bar{\fraku}'\shap \left( \twovec{\bo_1}{v_1}\tot \bar{\frak v}'\right)\right)\right)
+\bar{f}(u_0v_0) P_{\bo_1, R}\bar{f}\left( v_1\ot \left( \left( \twovec{\omega_1}{u_1}\tot \bar{\frak u}'\right) \shap \bar{\frakv}'\right)\right)\\
&\hspace{8cm}  (\text{by Eq.~(\mref{eq:dfuu2})})\\
=&\ \bar{f}(u_0v_0) P_{\omega_1, R}\bar{f}\left((u_1\ot \bar{\fraku}')\rsham  \left( 1\ot \twovec{\bo_1}{v_1}\tot \bar{\frak v}'\right)\right)
+\bar{f}(u_0v_0) P_{\bo_1, R}\bar{f}\left(  \left( 1\ot \twovec{\omega_1}{u_1}\tot \bar{\frak u}'\right)\rsham (v_1\ot  \bar{\frakv}')\right)\\
&\hspace{8cm} (\text{by Eq.~(\mref{eq:matdia})})\\
=&\ \bar{f}(u_0v_0) P_{\omega_1, R}\bar{f}(u_1\ot \bar{\fraku}')\bar{f} \left( 1\ot \twovec{\bo_1}{v_1}\tot \bar{\frak v}'\right)
+\bar{f}(u_0v_0) P_{\bo_1, R}\bar{f}  \left( 1\ot \twovec{\omega_1}{u_1}\tot \bar{\frak u}'\right)\bar{f} (v_1\ot  \bar{\frakv}')\\
&\hspace{8cm}(\text{by the induction hypothesis})\\
=&\ \bar{f}(u_0v_0) P_{\omega_1, R}\bar{f}(u_1\ot \bar{\fraku}')\bar{f} ( P_{\bo_1}(v_1\ot \bar{\frak v}'))
+\bar{f}(u_0v_0) P_{\bo_1, R}\bar{f}(P_{\omega_1}({u_1}\ot \bar{\frak u}'))\bar{f} (v_1\ot  \bar{\frakv}')\quad (\text{by Eq.~(\mref{eq:relrbo})})\\
=&\ \bar{f}(u_0v_0) P_{\omega_1, R}\Big(\bar{f}(u_1\ot \bar{\fraku}')P_{\bo_1, R}(\bar{f}(v_1\ot \bar{\frak v}'))\Big)
+\bar{f}(u_0v_0)P_{\bo_1, R}\Big(P_{\omega_1, R}(\bar{f}(u_1\ot \bar{\fraku}'))\bar{f}(v_1\ot \bar{\frak v}')\Big)\\
&\hspace{8cm} (\text{by Eq.~(\mref{eq:cm22})})\\
=&\ \bar{f}(u_0v_0) P_{\omega_1, R}(\bar{f}(u_1\ot \bar{\fraku}'))P_{\bo_1, R}(\bar{f}(v_1\ot \bar{\frak v}'))\quad (\text{by $(R, P_{\Omega, R})$ being a \match Rota-Baxter algebra})\\
=&\ u_0\bar{f}(v_0) P_{\omega_1, R}(\bar{f}(u_1\ot \bar{\fraku}'))P_{\bo_1, R}(\bar{f}(v_1\ot \bar{\frak v}'))\quad (\text{by $\bar{f}$ being $(\basf, \kappa_\Omega)$-linearity})\\
=&\ \Big(u_0 P_{\omega_1, R}(\bar{f}(u_1\ot \bar{\fraku}'))\Big)\Big(\bar{f}(v_0)P_{\bo_1, R}(\bar{f}(v_1\ot \bar{\frak v}'))\Big)\quad (\text{by the commutativity})\\
=&\ \bar{f}({\fraku})\bar{f}(\frakv)\quad (\text{by Eq.~(\mref{eq:dfu}) and ~(\ref{eq:dfuu2})}).
\end{align*}

\noindent{\bf Case 3.} $u_0\in \frakA^+$ and $v_0\in \basf$. This case is similar to Case 2.

\noindent{\bf Case 4.} $u_0\in \frakA^+$ and $v_0\in \frakA^+$. In this case, we write $u_0$ and $v_0$
in the form of Eq.~(\mref{eq:u0v04}) and obtain
\begin{align*}
\fraku=&\ u_0\ot \twovec{\omega_1}{u_1}\tot \bar{\fraku}' =u_0 \rsham P_{\omega_1}({u_1}\ot \bar{\fraku}')\in \frakA\ot (\bfk\Omega \ot\frakA^+)^{\ot m},\\
\frakv=&\ v_0 \ot \twovec{\bo_1}{v_1}\tot \bar{\frak v}' =v_0 \rsham P_{\bo_1}({v_1}\ot \bar{\frak v}')\in \frakA\ot (\bfk\Omega \ot\frakA^+)^{\ot n},
\end{align*}
which implies
\begin{align*}
&\ \bar{f}(\fraku\rsham \frakv)\\
=&\ \bar{f}\Big( (u_{0}\rsham P_{\omega_1}({u_1}\ot \bar{\fraku}')) \rsham ( v_{0}\rsham P_{\bo_1 }({v_1}\ot \bar{\frak v}')) \Big) \\
=&\ \bar{f}\left((u_0v_0)\rsham P_{\omega_1}({u_1}\ot \bar{\fraku}')\rsham
P_{\bo_1}({v_1}\ot \bar{\frak v}')\right)\quad(\text{by $\rsham$ being commutative})\\
=&\ \bar{f}\Bigg( (u_0v_0) \rsham \bigg(P_{\omega_1}\Big(({u_1}\ot \bar{\fraku}')\rsham P_{\bo_1}({v_1}\ot \bar{\frak v}')\Big)
+P_{\bo_1}\Big(P_{\omega_1}({u_1}\ot \bar{\fraku}')\rsham ({v_1}\ot \bar{\frak v}')\Big)
\bigg)\Bigg)
\quad(\text{by Step 1})\\
=&\ \bar{f}\Bigg( u_0v_0  P_{\omega_1}\Big(({u_1}\ot \bar{\fraku}')\rsham P_{\bo_1}({v_1}\ot \bar{\frak v}')\Big)
+ u_0v_0 P_{\bo_1}\Big(P_{\omega_1}({u_1}\ot \bar{\fraku}')\rsham ({v_1}\ot \bar{\frak v}')\Big)
\Bigg)\\
=&\ \bar{f}(u_0v_0)(P_{\omega_1,R}  \bar{f} )\left({u_1}\ot \bar{\fraku}'\rsham(P_{\bo_1}({v_1}\ot \bar{\frak v}'))\right)
+\bar{f}(u_0v_0)( P_{\bo_1,R} \bar{f})\Big((P_{\omega_1}({u_1}\ot \bar{\fraku}'))\rsham {v_1}\ot \bar{\frak v}'\Big) \\
&\hspace{8cm}(\text{by Eq.~(\mref{eq:dfuu2})})\\
=&\ \bar{f}(u_0v_0)P_{\omega_1, R} \Big( \bar{f} ({u_1}\ot \bar{\fraku}')\bar{f} (P_{\bo_1}({v_1}\ot \bar{\frak v}'))\Big)
+\bar{f}(u_0v_0)P_{\bo_1, R} \Big(\bar{f}(P_{\omega_1}({u_1}\ot \bar{\fraku}')) \bar{f}({v_1}\ot \bar{\frak v}')\Big)
\\
&\hspace{8cm}(\text{by the induction hypothesis})\\
=&\ \bar{f}(u_0v_0)\bigg(P_{\omega_1, R} \Big(\bar{f}({u_1}\ot \bar{\fraku}')P_{\bo_1,R} (\bar{f}({v_1}\ot \bar{\frak v}'))\Big)
+P_{\bo_1,R} \left(P_{\omega_1,R} (\bar{f}({u_1}\ot \bar{\fraku}'))\bar{f}({v_1}\ot \bar{\frak v}')\right)
\bigg)
\ \ (\text{by Eq.~(\ref{eq:cm22}}))\\
=&\ \bar{f}(u_0v_0)P_{\omega_1,R} (\bar{f}({u_1}\ot \bar{\fraku}'))P_{\bo_1,R} (\bar{f}({v_1}\ot \bar{\frak v}'))\quad (\text{by $(R, P_{\Omega, R})$ being a \match Rota-Baxter algebra})\\
=&\ \bar{f}(u_0)\bar{f}(v_0)P_{\omega_1,R} (\bar{f}({u_1}\ot \bar{\fraku}'))P_{\bo_1,R} (\bar{f}({v_1}\ot \bar{\frak v}'))\quad
(\text{by the initial step of this subcase })\\
=&\ \Big(\bar{f}(u_0)P_{\omega_1,R} (\bar{f}({u_1}\ot \bar{\fraku}'))\Big)\Big(\bar{f}(v_0)P_{\bo_1,R} (\bar{f}({v_1}\ot \bar{\frak v}'))\Big)\\
=&\ \bar{f}(\fraku)\bar{f}(\frakv) \quad(\text{by Eq.~(\mref{eq:dfuu2}})).
\end{align*}

({\bf The uniqueness}). Since $\bar{f}$ is a matching $(\basf, \kappa_\Omega)$-Rota-Baxter algebra homomorphism with $f = \bar{f}  j_A$,
it must be determinate uniquely by Eq.~(\mref{eq:ini}) -- Eq.~(\mref{eq:dfuu2}).

\section{\Match dendriform algebras and \match Zinbiel algebras}
\mlabel{sec:mzinb}
In this section, we introduce the concept of commutative matching dendriform algebras and the equivalent notion of \match Zinbiel algebras. We then establish their relationship with \mrbas, generalizing the connection of Zinbiel algebras with commutative Rota-Baxter algebras. Finally, free \match Zinbiel algebras are constructed.

\subsection{Commutative \match dendriform algebras}
Motivated by the natural connection of Rota-Baxter algebras (of weight zero) with dendriform algebras~\mcite{Agu00} on the one hand, and the connection with pre-Lie algebras on the other,
dendriform algebras have been generalized to \match dendriform algebras in~\mcite{ZGG}. We recall this notion and basic properties.

\begin{defn}
\mlabel{de:mdend}
Let $\Omega$ be a \nes. A {\bf \match dendriform algebra} is a module $D$ together with a family of binary operations $(\prec_\omega, \succ_\omega)_{\omega\in \Omega}$, such that, for $ x, y, z\in T$ and $\alpha,\beta\in \Omega$,
\begin{align}
(x\prec_{\alpha} y) \prec_{\beta} z=\ & x \prec_{\alpha} (y\prec_{\beta} z)
+x\prec_{ \beta} (y \succ_{\alpha} z), \mlabel{eq:ddf1} \\
(x\succ_{\alpha} y)\prec_{\beta} z=\ & x\succ_{\alpha} (y\prec_{\beta} z),\quad \quad \quad \quad \quad \ \ \ \ \mlabel{eq:ddf2} \\
 (x\prec_{\beta} y)\succ_{ \alpha}z
 +(x\succ_{\alpha} y) \succ_{\beta}z =\ & x\succ_{\alpha}(y \succ_{\beta} z). \mlabel{eq:ddf3}
\end{align}
\end{defn}
\begin{remark}
\mlabel{rem:md}
\begin{enumerate}
\item Let $(D, \, (\prec_\omega, \succ_\omega)_{\omega\in \Omega})$ be a \match dendriform algebra. Consider linear combinations
\begin{align}
\prec_A:=\sum_{\omega\in \Omega} a_\omega \prec_\omega\,  \text{ and }\,  \succ_A:=\sum_{\omega\in \Omega} a_\omega \succ_\omega,
\, a_\omega \in \bfk,\mlabel{eq:mdo}
\notag
\end{align}
with a finite support. Then $(D, \prec_A, \succ_A)$ is a dendriform algebra.

\item \label{it:md2}
\label{it:RBTD2} An \mrba $(R, \,(P_{\omega})_{\omega \in \Omega} )$ of weight
$\lambda_\Omega = (\lambda_\omega)_{\omega\in \Omega}$ induces a \match dendriform algebra
$(R, \, (\prec_{\omega}, \succ_{\omega})_{\omega \in \Omega})$, where
\begin{equation*}
x\prec_{\omega}y := xP_{\omega}(y)+\lambda_\omega xy\,\text{ and }\,  x\succ_{\omega}y := P_{\omega}(x)y \,\tforall\, x,y \in R, \omega\in \Omega.
\end{equation*}
\end{enumerate}
\end{remark}
Generalizing commutative dendriform algebras, we give
\begin{defn}
Let $\Omega$ be a nonempty set.
A \match dendriform  algebra $(D, (\prec_\omega, \succ_\omega)_{\omega\in \Omega})$ is called {\bf commutative} if
\begin{equation*}
x\succ_{\omega}y=y\prec_{\omega} x \, \tforall\,  x,y \in D, \omega\in \Omega.
\mlabel{eq:comu1}
\end{equation*}
\end{defn}

We give an example of \match dendriform algebras from Volterra integral operators.

\begin{exam}
As in Example~\mref{ex:Zin3}, consider the $\RR$-algebra $R:=\mathrm{Cont}(\RR)$ and a family $ (k_\omega(x))_{\omega\in \Omega}$ of continuous functions on $R$. Define a {\bf Volterra integral operator} $I_\omega: R\longrightarrow R$ by taking
\begin{equation*}
I_\omega(f(x)):= \int_0^x k_\omega(t)f(t)\,dt \quad \text{for } f\in R, \omega \in \Omega.
\end{equation*}
By Example~\mref{ex:Zin3}, $(R, (I_\omega)_{\omega\in \Omega})$ is a commutative \mrba of weight 0. Applying Remark~\mref{rem:md}~(\mref{it:md2}), the operations
\begin{align}
f(x)\prec_\omega g(x):= f(x) \int_0^x k_\omega(t)g(t)\,dt,\,\,
f(x)\succ_\omega g(x):= g(x) \int_0^x k_\omega(t)f(t)\,dt   \, \text{ for }\, f, g \in R, \omega\in \Omega.\mlabel{eq:mzin4}
\notag
\end{align}
equip $R$ with a commutative \match dendriform algebra structure.
\mlabel{exam:Zin}
\end{exam}

The commutative dendriform algebra is equivalent to the Zinbiel algebra which arose as the Koszul dual to a Leibniz algebra
introduced by Loday~\mcite{Lod95}. Free Zinbiel algebras were shown to be precisely the shuffle product algebra~\mcite{Losh}.

\begin{defn}
A {\bf  (left) \mza}  is a module $Z$ together with a family of binary operations
$(\circ_\omega)_{\omega\in \Omega}$ such that, for $x, y, z\in Z$ and $\alpha,\beta\in \Omega$,
\begin{equation}
(x\circ_{\alpha}y)\circ_{\beta}z=x\circ_{\alpha}(y\circ_{\beta}z)
+x\circ_{\beta}(z\circ_{\alpha}y).
\mlabel{eq:lzin}
\end{equation}
\end{defn}

\begin{remark}
\begin{enumerate}
\item Any  Zinbiel algebra can be viewed as a  \mza by taking $\Omega$ to be a singleton.

\item In a \mza $(Z, (\circ_\omega)_{\omega\in \Omega})$, $(Z, \circ_\omega)$ is a  Zinbiel algebra for any $\omega\in \Omega$.
\mlabel{it:zinbb}

\item For a \mza, it follows from Eq.~(\mref{eq:lzin}) that
\begin{align*}
(x\circ_\beta z)\circ_\alpha y=x\circ_\beta (z\circ_\alpha y)+x \circ_\alpha (y\circ_\beta z).
\end{align*}
Note that the right hand side is invariant when $(y,\alpha)$ is replaced by $(z,\beta)$. Thus
$$(x\circ_\alpha y)\circ_\beta z=(x\circ_\beta z)\circ_\alpha y,$$
which is precisely the second axiom of {\bf multiple permutative algebras} introduced by Foissy~\cite[Proposition~12]{Foi18}.
\end{enumerate}
\mlabel{re:zinb}
\end{remark}

It is well-known that a Zinbiel algebra is equivalent to a commutative dendriform algebra~\mcite{Agu00},
which we now generalize to the matching context.

\begin{prop}\label{prop:mda} Let $\Omega$ be a nonempty set.
\begin{enumerate}
\item If $(D, (\prec_{\omega}, \succ_{\omega})_{\omega \in \Omega})$ is a commutative \match dendriform  algebra, then
$(D,(\prec_\omega)_{\omega\in \Omega})$ is a match Zinbiel algebra.
\mlabel{it:comdd}

\item Conversely, if $(D, (\circ_\omega)_{\omega\in \Omega})$ is a \match Zinbiel algebra, define
\begin{align}
y\succ_{\omega}x:= x\prec_{\omega}y:=x\circ_\omega y \, \text{ for }\, x, y\in D, \omega\in \Omega.\mlabel{eq:newd}
\end{align}
Then the pair $(D, (\prec_{\omega}, \succ_{\omega})_{\omega \in \Omega})$ is a commutative \match dendriform algebra.
\mlabel{it:comddd}
\end{enumerate}
\end{prop}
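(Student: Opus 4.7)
The proof is essentially a matter of translating between the two sets of axioms via the commutativity relation. For part (a), I would start from the first dendriform axiom~\eqref{eq:ddf1}
\[
(x\prec_{\alpha} y) \prec_{\beta} z = x \prec_{\alpha} (y\prec_{\beta} z) + x\prec_{\beta} (y \succ_{\alpha} z),
\]
and use the commutativity hypothesis $y \succ_\alpha z = z \prec_\alpha y$ to rewrite the final term as $x \prec_\beta (z \prec_\alpha y)$. This is immediately the matching Zinbiel identity~\eqref{eq:lzin} with $\circ_\omega := \prec_\omega$, so part (a) requires nothing beyond this one substitution.

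For part (b), the task is to verify the three identities~\eqref{eq:ddf1}--\eqref{eq:ddf3} together with the commutativity $x \succ_\omega y = y \prec_\omega x$. The commutativity is immediate from the definition in~\eqref{eq:newd}. Axiom~\eqref{eq:ddf1} translates directly into the matching Zinbiel identity as in part (a). For axiom~\eqref{eq:ddf2}, rewriting both sides gives $(y \circ_\alpha x)\circ_\beta z$ and $(y \circ_\beta z)\circ_\alpha x$ respectively, and this is precisely the identity $(u\circ_\alpha v)\circ_\beta w = (u\circ_\beta w)\circ_\alpha v$ recorded in Remark~\ref{re:zinb}(c), which was derived as a consequence of~\eqref{eq:lzin}. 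For axiom~\eqref{eq:ddf3}, I would translate each of the three terms using $p \succ_\omega q = q \circ_\omega p$ and $p \prec_\omega q = p \circ_\omega q$, obtaining the equality
\[
z \circ_\alpha (x\circ_\beta y) + z\circ_\beta(y\circ_\alpha x) = (z\circ_\beta y)\circ_\alpha x,
\]
which is exactly the matching Zinbiel identity~\eqref{eq:lzin} applied to the triple $(z,y,x)$ with indices $(\beta,\alpha)$.

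The main ``obstacle''---really just bookkeeping---is keeping track of the swaps when translating axiom~\eqref{eq:ddf3}, since all three terms involve a $\succ$ that must be rewritten and the resulting instance of the Zinbiel identity uses a permuted triple of variables and indices. Once that translation is performed carefully, each axiom reduces to either the Zinbiel identity itself or its derived consequence from Remark~\ref{re:zinb}(c), so no further calculation is needed.
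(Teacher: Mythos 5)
Your proposal is correct and follows essentially the same route as the paper: part (a) is the single substitution $y\succ_\alpha z=z\prec_\alpha y$ in the first dendriform axiom, and part (b) is verified by translating each of the three axioms through Eq.~\eqref{eq:newd} into instances of the matching Zinbiel identity. The only cosmetic difference is that for axiom~\eqref{eq:ddf2} you invoke the derived identity $(u\circ_\alpha v)\circ_\beta w=(u\circ_\beta w)\circ_\alpha v$ from Remark~\ref{re:zinb} directly, whereas the paper re-derives it inline from Eq.~\eqref{eq:lzin}; both are valid and equivalent.
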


Continuing Example~\mref{exam:Zin}, the pair $(R,\, (\circ_\omega)_{\omega \in \Omega})$
is a \mza with
\begin{align}
f(x)\circ_\omega g(x):= f(x) \int_0^x k_\omega(t)g(t)\,dt   \, \text{ for }\, f, g \in R, \omega\in \Omega.
\notag
\end{align}

\begin{proof}
~(\mref{it:comdd}).
Suppose that $(D, (\prec_{\omega}, \succ_{\omega})_{\omega \in \Omega})$ is a commutative \match dendriform  algebra. Then for $x, y, z\in D$ and $\alpha, \beta \in \Omega$, we have
\begin{align*}
(x\prec_{\alpha}y)\prec_{\beta}z=&\ x \prec_{\alpha} (y\prec_{\beta} z)
+x\prec_{\beta} (y \succ_{\alpha} z) \quad(\text{by Eq.~(\mref{eq:ddf1}}))\\
=&\ x\prec_{\bim{\alpha}}(y\prec_{\beta}z)
+x\prec_{\bim{\beta}}(z\prec_{\alpha}y).
\end{align*}
~(\mref{it:comddd}).
It suffices to prove Eqs.~(\mref{eq:ddf1})--(\mref{eq:ddf3}).
Suppose that $(D, (\circ_\omega)_{\omega\in \Omega})$ is a \match Zinbiel algebra.
Then for $x, y, z\in D$ and $\alpha, \beta \in \Omega$, we have
\begin{equation}
\begin{aligned}
(x\prec_{\alpha}y)\prec_{\beta}z=&\ x\prec_{\alpha}(y\prec_{\beta}z)
+x\prec_{\beta}(z\prec_{\alpha}y)\quad (\text{by Eq.~(\mref{eq:lzin})})\\
=&\ x \prec_{\alpha} (y\prec_{\beta} z)
+x\prec_{\beta} (y \succ_{\alpha} z)\quad(\text{by Eq.~(\mref{eq:newd}})).
\mlabel{eq:ax1}
\end{aligned}
\end{equation}
Also,
\begin{align*}
(x\succ_{{\alpha}}y)\prec_{{\beta}}z=\ &(y\prec_{{\alpha}}x)\prec_{{\beta}}z
=y\prec_{\alpha}(x\prec_{\beta}z)
+y\prec_{\beta}(z\prec_{\alpha}x)\quad(\text{by Eq.~(\mref{eq:lzin}}))\\
=\ &y\prec_{\alpha}(z\succ_{\beta}x)
+y\prec_{\beta}(z\prec_{\alpha}x)\quad(\text{by Eq.~(\mref{eq:newd}}))\\
=\ &(y\prec_{{\beta}}z)\prec_{{\alpha}}x \quad(\text{by Eq.~(\mref{eq:ax1}}))\\
=\ &x\succ_{{\alpha}}(y\prec_{{\beta}}z)\quad(\text{by Eq.~(\mref{eq:newd}})).
\end{align*}
Further,
\begin{align*}
x\succ_{{\alpha}}(y\succ_{{\beta}}z)=\ &(z\prec_{{\beta}}y)\prec_{{\alpha}}x
=z\prec_{\beta}(y\prec_{{\alpha}}x)
+z\prec_{\alpha}(x\prec_{{\beta}}y)\quad(\text{by Eq.~(\mref{eq:lzin}}))\\
=\ &(y\prec_{{\alpha}}x)\succ_{\beta}z
+(x\prec_{{\beta}}y)\succ_{\alpha}z\quad(\text{by Eq.~(\mref{eq:newd}}))\\
=\ &(x\succ_{{\alpha}}y)\succ_{\beta}z
+(x\prec_{{\beta}}y)\succ_{\alpha}z\quad(\text{by Eq.~(\mref{eq:newd}}))\\
=\ & (x\prec_{\beta} y)\succ_{ \alpha}z
 +(x\succ_{\alpha} y) \succ_{\beta}z,
\end{align*}
as required.
\end{proof}

The following result shows that a \mza gives rise to other \mzas by arbitrary finite linear combinations.

\begin{prop}
Let $\Omega$ and $I$ be  nonempty sets  and let $A_i:\Omega\to \bfk, i\in I$, be a family of maps
with finite supports, identified with $A_i= (a_{i,\omega})_{\omega\in \Omega}$.
Let $(Z,\, (\circ_\omega)_{\omega \in \Omega})$ be a \mza.  Consider the linear combinations
\begin{align}
\circ_i:=\circ_{A_i}:=\sum_{\omega\in \Omega} a_{i,\omega} \circ_\omega\, \quad \text{  for }\,  i\in I.
\mlabel{eq:dfav}
\end{align}
Then
$(Z,\, (\circ_i)_{i \in I})$
is a \mza. In particular, $(Z, \circ_i)$ is a \za for each $\omega\in \Omega$.
\end{prop}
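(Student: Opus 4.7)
The plan is to verify the matching Zinbiel identity for the new family $(\circ_i)_{i\in I}$ by a direct bilinear expansion, using the identity already available for $(\circ_\omega)_{\omega\in \Omega}$. Fix $i,j\in I$ and $x,y,z\in Z$; the goal is to show
\begin{align*}
(x\circ_i y)\circ_j z = x\circ_i(y\circ_j z) + x\circ_j(z\circ_i y).
\end{align*}
Since $A_i$ and $A_j$ have finite support, the sums below are finite and there are no convergence issues.

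First I would expand the left-hand side using the definition \meqref{eq:dfav} and the bilinearity of each $\circ_\omega$:
\begin{align*}
(x\circ_i y)\circ_j z = \sum_{\omega,\tau\in\Omega} a_{i,\omega}\, a_{j,\tau}\, (x\circ_\omega y)\circ_\tau z.
\end{align*}
Next I would apply the matching Zinbiel identity \meqref{eq:lzin} termwise, replacing $(x\circ_\omega y)\circ_\tau z$ by $x\circ_\omega(y\circ_\tau z)+x\circ_\tau(z\circ_\omega y)$, and then factor the double sum back into two single sums, again using bilinearity:
\begin{align*}
\sum_{\omega,\tau} a_{i,\omega}a_{j,\tau}\, x\circ_\omega(y\circ_\tau z) &= \sum_\omega a_{i,\omega}\, x\circ_\omega\Bigl(\sum_\tau a_{j,\tau}\, y\circ_\tau z\Bigr) = x\circ_i(y\circ_j z),\\
\sum_{\omega,\tau} a_{i,\omega}a_{j,\tau}\, x\circ_\tau(z\circ_\omega y) &= \sum_\tau a_{j,\tau}\, x\circ_\tau\Bigl(\sum_\omega a_{i,\omega}\, z\circ_\omega y\Bigr) = x\circ_j(z\circ_i y).
\end{align*}
Adding these produces the desired identity, establishing that $(Z,(\circ_i)_{i\in I})$ is a matching Zinbiel algebra. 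The last assertion is then immediate: for any single index $i\in I$, taking $j=i$ in \meqref{eq:lzin} recovers the Zinbiel identity for $(Z,\circ_i)$, as already noted in Remark~\mref{re:zinb}\mref{it:zinbb}.

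There is essentially no obstacle here; the argument is a routine bilinearity calculation, and the only subtle point is ensuring that the rearrangements of sums are legitimate, which is guaranteed by the finite-support hypothesis on each $A_i$. The same template, by the way, explains Remark~\mref{remk:MRB}(a) and the first item of Remark~\mref{rem:md} from the matching Rota-Baxter and matching dendriform settings.
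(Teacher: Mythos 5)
Your proof is correct and follows essentially the same route as the paper: expand $(x\circ_i y)\circ_j z$ bilinearly into a finite double sum, apply the matching Zinbiel identity~\eqref{eq:lzin} termwise, and re-collect the two resulting double sums into $x\circ_i(y\circ_j z)+x\circ_j(z\circ_i y)$, with the final assertion following from Remark~\ref{re:zinb}~(\ref{it:zinbb}). In fact your write-up is slightly cleaner than the paper's, whose displayed computation carries a typo ($y\circ_\alpha z$ where $z\circ_\alpha y$ is meant) that your version avoids.
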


\begin{proof}
For $x, y, z\in Z$ and $\alpha, \beta \in \Omega$, we have
\begin{align*}
(x\circ_{i} y) \circ_{j} z
=&\ \sum_{\beta \in \Omega} a_{j, \beta} \left(\sum_{\alpha\in \Omega} a_{i,\alpha} x \circ_\alpha y\right) \circ_{\beta} z \quad \text{(by Eq.~(\mref{eq:dfav}))}\\
=&\ \sum_{\alpha\in \Omega}\sum_{\beta \in \Omega} a_{i,\alpha} a_{j, \beta} (x \circ_\alpha y)\circ_{\beta} z\\
=&\ \sum_{\alpha\in \Omega}\sum_{\beta \in \Omega} a_{i,\alpha} a_{j, \beta} \Big( x \circ_{\alpha} (y\circ_{\beta} z)
+x\circ_{ \beta} (y \circ_{\alpha} z)\Big)\quad \text{(by Eq.~(\mref{eq:lzin}))}\\
=&\ \sum_{\alpha\in \Omega}\sum_{\beta \in \Omega} a_{i,\alpha} a_{j, \beta} x \circ_{\alpha} (y\circ_{\beta} z)
+\sum_{\alpha\in \Omega}\sum_{\beta \in \Omega} a_{i,\alpha} a_{j, \beta} x\circ_{ \beta} (y \circ_{\alpha} z)\\
=&\ \sum_{\alpha\in \Omega} a_{i,\alpha} x \circ_{\alpha} \left(\sum_{\beta \in \Omega}a_{j, \beta} y\circ_{\beta} z\right)
+\sum_{\beta \in \Omega} a_{j, \beta} x\circ_{ \beta} \left(\sum_{\alpha\in \Omega}a_{i,\alpha} y \circ_{\alpha} z\right)\\
=&\ \sum_{\alpha\in \Omega} a_{i,\alpha} x \circ_{\alpha} \left(  y\circ_{j} z\right)
+\sum_{\beta \in \Omega} a_{j, \beta} x\circ_{ \beta} \left( y \circ_{i} z\right) \quad \text{(by Eq.~(\ref{eq:dfav}) )}\\
=&\ x \circ_{i} (y\circ_{j} z)
+x\circ_{ j} (y \circ_{i} z).
\end{align*}
Thus $(Z,\, (\circ_i)_{i \in \Omega})$ is a \mza. Remark~\mref{re:zinb}~(\mref{it:zinbb}) gives the second statement.
\end{proof}

\subsection{The free \match Zinbiel algebra}
In this subsection, we construct the free commutative \match Zinbiel algebra on a module.

\begin{defn}
Let $\Omega$ be a nonempty set and $M$ a module.
A {\bf free \match Zinbiel algebra} on $M$ is a \match Zinbiel algebra $(D, (\circ_{\omega})_{\omega\in \Omega})$
together with a module homomorphism $j_M: M\rightarrow D$ satisfying the universal property:
for any  \match Zinbiel algebra $(D', (\circ_{\omega,\,D'})_{\omega\in \Omega})$ and any module homomorphism $f: M\rightarrow D',$
there is a unique \match Zinbiel algebra homomorphism $\bar{f}: D\rightarrow D'$ such that $f=\bar{f}  j_M.$
\end{defn}

For a module $M$, we can regard $M$ as a commutative algebra equipped with the zero multiplication.
By Theorem~\mref{thm:comfree}, the free commutative \mrba of weight zero on $M$ is
\begin{equation*}
\msha(M):= M\otimes\Shu(M_\Omega)=M \bigoplus \left(\bigoplus_{k\geq 1}   M \ot (\bfk \Omega \ot M)^{\otimes k}\right)
\end{equation*}
equipped with the product $\diamond$ defined in Eq.~\meqref{eq:matdia} and operators defined in Eq.~\meqref{eq:op}.
Let
\begin{align*}
\fraka:=&\  a_0\ot \frak{\bar a} :=a_0\ot \twovec{\alpha_1}{a_1} \tot \bar{\fraka}'  := a_0\ot \twovec{\alpha_1}{a_1} \tot \twovec{\alpha_2}{a_2}\tot   \cdots  \tot \twovec{\alpha_m}{a_m}\in \sha(M_\Omega),\\
\frakb:=&\ b_0\ot \frak{\bar b} :=\ b_0\ot \twovec{\beta_1}{b_1}\tot \bar{\frakb}' := b_0\ot \twovec{\beta_1}{b_1}\tot \twovec{\beta_2} {b_2} \tot \cdots  \tot  \twovec{\beta_n} {b_n} \in \sha(M_\Omega)
\end{align*}
with $m, n\geq 0$. For $\omega\in \Omega$, define
\begin{equation}
\begin{aligned}
\prec_{\omega}:\ &\msha(M) \ot \msha(M) \rightarrow \msha(M),\\
&\fraka\ot \frakb \mapsto a_0 \ot  \left(\bar{\fraka} \shap\left(\twovec{\omega}{b_0} \tot \bar{\frakb} \right)\right)
= \fraka \rsham \left(1\ot \twovec{\omega}{b_0}\tot \bar{\frakb} \right) = \fraka\rsham P_{\omega}(\frakb).
\end{aligned}
\mlabel{eq:xiaoyu}
\end{equation}

Now we are ready to give the free matching Zinbiel algebras.

\begin{theorem}
Let $\Omega$ be a nonempty set and $M$ a module.
Then $(\msha(M), (\prec_{\omega})_{\omega \in \Omega})$,
together with the natural algebra homomorphism $j_{M}: M \rightarrow \msha(M)$,
is the free \match Zinbiel algebra on $M$.
\mlabel{thm:comu}
\end{theorem}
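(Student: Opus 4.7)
The plan is to prove the theorem in two stages: first show that $(\msha(M), (\prec_{\omega})_{\omega\in\Omega})$ is a matching Zinbiel algebra, then verify its universal property. For the first stage, I would chain three previously established results. Theorem~\mref{thm:comfree} applied to $M$ viewed as a commutative algebra with zero multiplication yields that $(\msha(M), \sham, (P_\omega)_{\omega\in\Omega})$ is a commutative matching Rota-Baxter algebra of weight zero. Remark~\mref{rem:md}~(\mref{it:RBTD2}), specialized to $\lambda_\Omega = 0$, then endows $\msha(M)$ with a matching dendriform structure whose left operation $\fraka \prec_\omega \frakb := \fraka \sham P_\omega(\frakb)$ coincides with Eq.~\meqref{eq:xiaoyu}, while commutativity of $\sham$ gives $\fraka \succ_\omega \frakb = P_\omega(\fraka)\sham \frakb = \frakb \prec_\omega \fraka$, so this matching dendriform structure is commutative. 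Proposition~\mref{prop:mda}~(\mref{it:comdd}) then converts it into the desired matching Zinbiel structure with operations $\circ_\omega = \prec_\omega$.

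For the universal property, given a matching Zinbiel algebra $(D', (\circ_{\omega, D'})_{\omega\in\Omega})$ and a module map $f: M \to D'$, I would define $\bar{f}: \msha(M) \to D'$ inductively on the tensor depth $m$: set $\bar{f}(a_0) := f(a_0)$ for $a_0 \in M$, and for a pure tensor $\fraka = a_0 \ot \twovec{\alpha_1}{a_1} \tot \bar{\fraka}'$ of depth $m \geq 1$, set
\begin{equation*}
\bar{f}(\fraka) := f(a_0) \circ_{\alpha_1, D'} \bar{f}\!\left(a_1 \ot \bar{\fraka}'\right).
\end{equation*}
Uniqueness is immediate: a direct computation using Eq.~\meqref{eq:xiaoyu} shows that every pure tensor $a_0 \ot \twovec{\alpha_1}{a_1} \tot \cdots \tot \twovec{\alpha_m}{a_m}$ equals the iterated Zinbiel product $a_0 \prec_{\alpha_1}\!\bigl(a_1 \prec_{\alpha_2}(\cdots \prec_{\alpha_m} a_m)\bigr)$ in $\msha(M)$, so any matching Zinbiel homomorphism extending $f$ is forced to take the value above on pure tensors.

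The main obstacle is verifying that $\bar{f}$ respects the Zinbiel operations, i.e.~$\bar{f}(\fraka \prec_\omega \frakb) = \bar{f}(\fraka) \circ_{\omega, D'} \bar{f}(\frakb)$ for all pure tensors $\fraka, \frakb$. I would argue by induction on $m + n$, the sum of the depths of $\fraka$ and $\frakb$; the base case $m = 0$ follows directly from the definitions of $\prec_\omega$ and $\bar{f}$. In the inductive step (with $m \geq 1$), applying Eq.~\meqref{eq:xiaoyu} followed by one step of the shuffle recursion Eq.~\meqref{eq:shrec} splits $\fraka \prec_\omega \frakb$ into two summands, which, using commutativity of $\shap$, are identified respectively with $a_0 \prec_{\alpha_1}\!\bigl((a_1 \ot \bar{\fraka}') \prec_\omega \frakb\bigr)$ and $a_0 \prec_\omega\!\bigl(\frakb \prec_{\alpha_1}(a_1 \ot \bar{\fraka}')\bigr)$. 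Unfolding the definition of $\bar{f}$ and invoking the induction hypothesis on each summand reduces the desired identity to a single instance of the matching Zinbiel axiom Eq.~\meqref{eq:lzin} in $D'$ applied to $f(a_0)$, $\bar{f}(a_1 \ot \bar{\fraka}')$, and $\bar{f}(\frakb)$ with parameters $\alpha_1, \omega$, which reassembles the two summands into $\bigl(f(a_0) \circ_{\alpha_1, D'} \bar{f}(a_1 \ot \bar{\fraka}')\bigr) \circ_{\omega, D'} \bar{f}(\frakb) = \bar{f}(\fraka) \circ_{\omega, D'} \bar{f}(\frakb)$.
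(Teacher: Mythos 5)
Your proposal is correct and follows essentially the same route as the paper's own proof: the matching Zinbiel structure is obtained by chaining Theorem~\mref{thm:comfree}, Remark~\mref{rem:md}~(\mref{it:md2}) and Proposition~\mref{prop:mda}~(\mref{it:comdd}), and the universal property is verified with the same inductive definition of $\bar{f}$, the same induction on $m+n$ reducing the compatibility with $\prec_\omega$ to one application of the axiom~\meqref{eq:lzin}, and the same uniqueness argument via iterated products. Your choice of $m=0$ (for arbitrary $n$) as the base case is a marginally cleaner bookkeeping of the induction than the paper's $m+n=0$, but the substance is identical.
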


\begin{remark}\mlabel{rk:shuzin}
\begin{enumerate}
	\item
On the free commutative Rota-Baxter algebra of weight zero $\sha(M):=M\otimes \Shu(M)$, the multiplication
\begin{equation}\mlabel{eq:double}
\fraka \star \frakb: = \fraka \prec \frakb + \fraka \succ \frakb = \fraka \rsham P(\frakb) + P(\fraka) \sham \frakb \quad \text{ for } \fraka, \frakb\in \sha(M),
\end{equation}
is precisely the shuffle product $\shap$ defined in Eq.~\meqref{eq:shrec}, noting that $\sha(M)= \Shu(M)^+$ as modules~\mcite{Gub}. Here $\Shu(M)=\bfk \oplus \Shu(M)^+$. Thus when $\Omega$ is a singleton, the theorem recovers the characterization of Loday~\mcite{Lod95} that the nonunitary shuffle product algebra $\Shu(M)^+$ on $M$ is the free Zinbiel algebra on $M$.
\item
It is also worth noticing that, for general $\Omega$, only the nonunitary shuffle product algebra $\Shu(\bfk\Omega\ot M)^+$ is not enough to give a \match Zinbiel algebra via Eq.~\meqref{eq:xiaoyu}, not to mention the free \match Zinbiel algebra on $M$. This is because a factor $b_0$ in $M$ is required to define the products $\prec_\omega$ in Eq.~\meqref{eq:xiaoyu}. To see this in another way, $\msha(M)=M\ot \Shu(M_\Omega)$ is linearly identified with $\Shu(M_\Omega)^+$ only when $\Omega$ is a singleton. The above connection of the shuffle product algebra with free Zinbiel algebras in Loday's work is thanks to the fact that the shuffle product algebra $(\Shu(M)^+,\shap\!)$ happens to be the algebra $(\sha(M),\star)$ with the derived product $\star$ in Eq.~\meqref{eq:double}.
\end{enumerate}
\end{remark}

\begin{proof}
By Theorem~\mref{thm:comfree}, the triple $(\sha(M_\Omega), \sham, (P_{\omega,\, M})_{\omega\in \Omega})$ is the free commutative \match Rota-Baxter algebra of weight zero on $M$. So $(\sha(M_\Omega), (\prec_{\omega}, \succ_{\omega})_{\omega\in \Omega})$ is a \match dendriform  algebra by Remark~\mref{rem:md}~(\mref{it:md2}), where
\begin{equation}
\begin{aligned}
\fraka\succ_{\omega}\frakb : =& P_\omega(\fraka) \sham \frakb
= \left(1\ot \twovec{\omega}{a_0}\tot \bar{\fraka}\right)
\sham \frakb=\ b_0 \ot\left(\left( \twovec{\omega}{a_0}\tot \bar{\fraka} \right)\shap  \bar{\frakb} \right), \\
\fraka\prec_{\omega}\frakb:=&\fraka\sham P_\omega(\frakb)= \fraka \sham \left(1\ot \twovec{\omega}{b_0}\tot \bar{\frakb}\right) =a_0 \ot \left(  \bar{\fraka} \shap \left( \twovec{\omega}{b_0} \tot \bar{\frakb} \right)\right).
\end{aligned}
\mlabel{eq:defd}
\end{equation}
Further, it is commutative, as $(\sha(M_\Omega), \sham)$ is commutative. Thus it follows from Proposition~\mref{prop:mda}~(\mref{it:comdd})  that $(\sha(M_\Omega), (\prec_{\omega})_{\omega \in \Omega})$ is a matching Zinbiel algebra.

We next prove that $(\sha(M_\Omega), (\prec_{\omega})_{\omega \in \Omega}, )$ is the free matching Zinbiel algebra on $M$ by verifying
its universal property. For this, let $(D, (\prec_{\omega,\,D})_{\omega \in \Omega})$ be a matching Zinbiel algebra and let $f: M\rightarrow D$ be a module homomorphism. We prove that there is unique homomorphism $\free{f}:\sha(M_\Omega)\rightarrow D$ of matching Zinbiel algebras such that $\free{f} j_M=f$.

({\bf The existence}). Define a linear map $\bar{f}: \sha(M_\Omega)\rightarrow D$ as follows. For $\fraka=a_0\ot \twovec{\alpha_1}{a_1} \tot \bar{\fraka}'\in M\ot (\bfk \Omega \ot M)^{\ot m}$ with $a_0\in M$,  and $\twovec{\alpha_1}{a_1} \tot \bar{\fraka}'=\bar{\fraka}\in (\bfk \Omega \ot M)^{\ot m}$, we define $\bar{f}(\fraka)$ by induction on $m\geq 0$. For the initial step of $m=0,$ we have $\fraka=a_0$ and define
\begin{align}
\bar{f}(\fraka)=\bar{f}(a_0):=f(a_0). \mlabel{eq:init}
\end{align}
For the induction step of $m\geq 1,$ we define
\begin{equation}
\bar{f}(\mathfrak{ a}):=\bar{f}\left(a_0\ot \twovec{\alpha_1}{a_1} \tot \bar{\fraka}'\right):= f(a_{0})\prec_{{\alpha_1},\,D }\bar{f}({a_1} \ot \bar{\fraka}').
\mlabel{eq:comho}
\end{equation}

We now prove that $\bar{f}$ is a homomorphism of \match dendriform  algebras:
\begin{equation}
\bar{f}(\fraka\prec_{{\omega}}\frakb)=\bar{f}(\mathfrak{a})\prec_{{\omega},\,D}\bar{f}(\frakb)
\mlabel{eq:comff}
\end{equation}
for $\fraka=a_0\ot \twovec{\alpha_1}{a_1} \tot \bar{\fraka}' \in M\ot (\bfk \Omega \ot M)^{\ot m}$ and $\frakb=b_0\ot \twovec{\beta_1}{b_1}\tot \bar{\frakb}' \in M\ot (\bfk \Omega \ot M)^{\ot n}$ with $m,n\geq 0$.

We proceed to prove Eq.~(\mref{eq:comff}) by induction on $m+n\geq 0$. When $m+n=0$, we have $\mathfrak{a}=a_0$ and $\frakb=b_0$ are in $M$. So by Eqs.~(\ref{eq:xiaoyu}), (\ref{eq:init}) and~(\mref{eq:comho}),
\begin{align*}
\bar{f}(\fraka\prec_{{\omega}}\frakb)= \bar{f}(a_0 \prec_\omega b_0) = \bar{f}(a_0\ot \twovec{\omega}{b_0}) =f(a_0)\prec_{{\omega} ,\,D }f(b_0) =\bar{f}(\mathfrak{ a})\prec_{{\omega},\,D}\bar{f}(\frakb).
\end{align*}
Assume that Eq.~\emph{}(\mref{eq:comff}) has been proved when $m+n=k$ for a $k\geq 0$,
and consider the case of $m+n = k+1$.
Then
\begin{align*}
&\ \bar{f}(\fraka\prec_{{\omega}}\frakb)\\
=&\ \bar{f}\left(a_0 \ot \left(  \bar{\fraka} \shap \left( \twovec{\omega}{b_0} \tot \bar{\frakb} \right)\right)\right)\\
=&\bar{f}\Bigg(\ a_0 \ot \twovec{\alpha_1}{a_1}\tot  \left( \bar{\fraka}'  \shap   \left(\twovec{\omega} {b_0} \tot \bar{\frakb}\right) \right)
+a_0 \ot \twovec{\omega}{b_0} \tot \left( \left( \twovec{\alpha_1}{a_1}\tot \bar{\fraka}'\right) \shap  \bar{\frakb}\right)\Bigg) \quad (\text{by Eq.~(\ref{eq:shrec})}) \\
=&\ \bar{f}(a_0)\prec_{{\alpha_1},\, D}\bar{f} (({a_1} \ot \bar{\fraka}')\prec_{{\omega}}\frakb)+\bar{f}(a_0)\prec_{{\omega},\,D }\bar{f} (\frakb\prec_{{\alpha_1}}({a_1} \ot \bar{\fraka}')) \quad(\text{by Eqs.~(\ref{eq:defd}) and~(\ref{eq:comho})})\\
=&\ \bar{f}(a_0)\prec_{{\alpha_1},\, D} \Big(\bar{f}({a_1} \ot \bar{\fraka}')\prec_{{\omega},\,D}\bar{f} (\frakb)\Big)+ \bar{f}(a_0)\prec_{{\omega},\,D} \Big(\bar{f}(\frakb)\prec_{{\alpha_1},\,D}\bar{f}({a_1} \ot \bar{\fraka}')\Big)\\
&\hspace{4.5cm}\quad(\text{by the induction hypothesis})\\
=&\ (\bar{f}(a_0)\prec_{{\alpha_1},\, D}\bar{f}({a_1} \ot \bar{\fraka}'))\prec_{{\omega},\,D}
\bar{f}(\frakb)\quad(\text{by Eq.~(\mref{eq:lzin})})\\
=&\ ({f}(a_0)\prec_{{\alpha_1},\,D}\bar{f}({a_1} \ot \bar{\fraka}'))\prec_{{\omega},\,D }\bar{f}(\frakb)\quad(\text{by Eq.~(\ref{eq:init}}))\\
=&\ \bar{f}(\fraka)\prec_{{\omega},\, D}\bar{f}(\frakb)\quad(\text{by Eq.~(\mref{eq:comho}})),
\end{align*}

({\bf The uniqueness}). Suppose that $\bar{f}': \sha_\Omega(M)\rightarrow D$ is a homomorphism of  \match Zinbiel  algebras with $\free{f}' j_M=f$.
We verify $\bar{f}'(\fraka)=\free{f}(\fraka)$ for $\fraka=a_0\ot \twovec{\alpha_1}{a_1} \tot \bar{\fraka}'\in M\ot (\bfk \Omega \ot M)^{\ot m}$ by induction on $m\geq 0$. For the initial step of $m=0,$ we have $\fraka=a_0$. By Eq.~(\mref{eq:init}), we have
\begin{align*}
\bar{f}'(\fraka)=\bar{f}'( a_0)=f(a_0)=\free{f}(\fraka).
\end{align*}
For the induction step of $m\geq 1$, we have
$$\bar{f}'(\mathfrak{ a})=\bar{f}'(a_0\ot \twovec{\alpha_1}{a_1} \tot \bar{\fraka}') =\bar{f}'(a_0\prec_{{\alpha_1} } ({a_1} \ot \bar{\fraka}'))
=\bar{f}'(a_0)\prec_{{\alpha_1},\, D}\bar{f}'({a_1} \ot \bar{\fraka}')=f(a_0)\prec_{{\alpha_1},\, D}\bar{f}'({a_1} \ot \bar{\fraka}').$$
By the induction hypothesis, $\bar{f}'({a_1} \ot \bar{\fraka}')=\bar{f}({a_1} \ot \bar{\fraka}')$. Then $\bar{f}'(\fraka)=\free{f}(\fraka)$ by the construction of $\free{f}(\fraka)$. This completes the induction.
\end{proof}

\noindent {\bf Acknowledgments}: This work was supported by the National Natural Science Foundation of China (Grant No.\@ 11771190, 12071191,  11861051) and the Natural Science Foundation of Gansu Province (Grant No.~20JR5RA249). We thank the anonymous referee for very helpful suggestions.

\end{document}